\documentclass[11pt,leqno,a4paper, english]{smfart}


\usepackage[]{hyperref}
\hypersetup{
    colorlinks=true,       
    linkcolor=red,          
    citecolor=blue,        
    filecolor=magenta,      
    urlcolor=cyan           
}

\usepackage{amsmath}
\usepackage{amsfonts,amssymb}
\usepackage{enumerate}
\usepackage{mathrsfs}
\usepackage{amsthm}
\setcounter{tocdepth}{2}
\usepackage{a4wide}

\theoremstyle{plain}
\newtheorem{thm}{Theorem }[section]
\newtheorem{prop}[thm]{Proposition}
\newtheorem{lemm}[thm]{Lemma}
\newtheorem{coro}[thm]{Corollary}

\theoremstyle{definition}
\newtheorem{rema}[thm]{Remark}


\DeclareMathOperator{\supp}{supp}

\DeclareSymbolFont{pletters}{OT1}{cmr}{m}{sl}
\DeclareMathSymbol{s}{\mathalpha}{pletters}{`s}


\def\eps{\varepsilon}

\def\la{\left\lvert}
\def\lA{\left\lVert}

\def\mez{\frac{1}{2}}

\def\ra{\right\rvert}
\def\rA{\right\rVert}

\def\xC{\mathbf{C}}
\def\xN{\mathbf{N}}
\def\xR{\mathbf{R}}
\def\xS{\mathbf{S}}
\def\xT{\mathbf{T}}
\def\xZ{\mathbf{Z}}

\numberwithin{equation}{section}

\pagestyle{plain}

\title[concentration of eigenfunctions]{Concentration of Laplace eigenfunctions  and stabilization of weakly damped wave equation}
\author{N.Burq}
\address{Laboratoire de Math\'ematiques UMR 8628 du CNRS. Universit\'e Paris-Sud, B\^atiment 425, 91405 Orsay Cedex}
 \email{nicolas.burq@math.u-psud.fr}
 \author{C.Zuily }
 \address{Laboratoire de Math\'ematiques UMR 8628 du CNRS. Universit\'e Paris-Sud, B\^atiment 425, 91405 Orsay Cedex}
 \email{claude.zuily@math.u-psud.fr}
 \thanks{N.B. was supported in part by Agence Nationale de la Recherche
  project NOSEVOL, 2011 BS01019 01. N.B. and C. Z. were supported in part by Agence Nationale de la Recherche
  project  ANA\'E ANR-13-BS01-0010-03.}
\date{\empty}
\begin{abstract}
In this article, we prove some universal bounds on the speed of concentration on small (frequency-dependent) neighborhoods of submanifolds of $L^2$-norms of quasi modes for Laplace operators on compact manifolds. We deduce new results on the rate of decay of weakly damped wave equations.  
\end{abstract}
\begin{altabstract}
On d\'emontre dans cet article des bornes universelles sur la vitesse de concentration dans de petits voisinages (d\'ependant de la fr\'equence) de sous vari\'et\'es pour les normes $L^2$ de quasimodes du Laplacian sur une vari\'et\'e compacte. On en d\'eduit de nouveaux r\'esultats sur la d\'croissance des \'equations des ondes faiblement amorties.
\end{altabstract} 

\begin{document}
 \maketitle
 \section{Notations and main results}
   Let  $(M,g)$ be a smooth    compact  Riemanian manifold without boundary of  dimension $n$, $ \Delta_g$ the Laplace-Beltrami operator on $M$ and  $d(\cdot,\cdot)$  the geodesic   distance on  $M$. 
   
   The purpose of this work is to investigate the concentration properties of eigenfunctions of the  operator $ \Delta_g$ (or more generally quasimodes). There are many ways of measuring such possible concentrations. The most classical is by describing semi-classical (Wigner) measures (see the works by Shnirelman~\cite{Sh74}, Zelditch~\cite{Ze87}, Colin de Verdi\`ere~\cite{CdV85}, G\'erard-Leichtnam~\cite{GeLe93-1}, Zelditch-Zworski~\cite{ZeZw96}, Helffer-Martinez-Robert~\cite{HeMaRo87}. Another approach was iniciated by Sogge and consists in the studying the potential growth of $\|\varphi_\lambda\|_{L^p(M)}$, see the works by Sogge~\cite{So88, So93}, Sogge-Zelditch~\cite{SoZe01}, Burq-G\'erard-Tzvetkov~\cite{BuGeTz03-1, BuGeTz03-2, BuGeTz04}. Finally in~\cite{BuGeTz05, BoRu12,Ta10} the concentration of restrictions on submanifolds was considered. Here, we focus on a situation intermediate between the latter (concentration on submanifolds) and the standard $L^2$-concentration (Wigner measures). Indeed, we study the concentration (in $L^2$ norms) on small (frequency dependent) neighborhoods of submanifolds. 
   Our first result is the following 
      \begin{thm}\label{quasi-mode}
    Let $k \in \{1,\ldots, n-1\}$ and $\Sigma^k$ be a submanifold of dimension $k$ of   $M$. Let 
    Let us introduce for $\beta >0$,
\begin{equation}\label{N}
N_{\beta} = \{ p\in M: d(p, \Sigma^k) < \beta\}.
\end{equation}
There exists $C>0, h_0>0$ such that for every $0<h\leq h_0$, every $\alpha \in (0,1)$ and every  solution $\psi  \in H^2(M)$ of the equation on $M$
$$(h^2 \Delta_g   +1)\psi  = g $$
we have the estimate
\begin{equation}\label{qm-N}
\Vert \psi  \Vert_{L^2(N_{\smash{\alpha h^{1/2}}})} \leq C\alpha^\sigma\big( \Vert \psi  \Vert_{L^2(M)} + \frac{1}{h}\Vert g  \Vert_{L^2(M)}\big)
 \end{equation}
 where  $\sigma = 1$ if $k \leq n-3$, $\sigma = 1^-  $ if $k = n-2, \sigma = \mez $ if $k = n-1.$   
 \end{thm}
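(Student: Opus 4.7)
The plan is to reduce to a local semiclassical model via Fermi coordinates along $\Sigma^k$, and then carry out a transverse analysis of the quasi-mode. By compactness of $\Sigma^k$ and a partition of unity, one may work in coordinates $(x,y)\in\xR^k\times\xR^m$ with $m=n-k$, in which $\Sigma^k=\{y=0\}$, the metric coincides with the Euclidean metric on $\Sigma^k$ (with smooth curvature-type corrections off of it), and $h^2\Delta_g+1$ has semiclassical principal symbol close to $|\xi|_g^2-1$. Elliptic regularity outside $\{|\xi|_g=1\}$ lets one further assume $\psi$ is microlocalized on the unit cosphere bundle, and the tube $N_{\alpha h^{\mez}}$ becomes $\{|y|<\alpha h^{\mez}\}$.

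The codimension-one case ($m=1$, $\s=\mez$) serves as the template and follows directly from the sharp semiclassical trace estimate for quasi-modes: for every $y\in\xR$,
\[\|\psi(\cdot,y)\|_{L^2(\xR^{n-1})}^2\les h^{-\mez}\bigl(\|\psi\|_{L^2}^2+h^{-2}\|g\|_{L^2}^2\bigr),\]
and integrating over $|y|<\alpha h^{\mez}$ the length factor $2\alpha h^{\mez}$ exactly cancels the trace loss $h^{-\mez}$, yielding $\alpha$ in $L^2$-squared, i.e.\ $\alpha^{\mez}$ in $L^2$. For codimension $m\ge 2$, the natural move is the anisotropic rescaling $y=h^{\mez}\tilde y$, $\Psi(x,\tilde y):=h^{m/4}\psi(x,h^{\mez}\tilde y)$, turning the tube into the $h$-independent ball $|\tilde y|<\alpha$ and the equation into $(h^2\Delta_x+h\Delta_{\tilde y}+1)\Psi=\tilde g$ modulo curvature perturbations. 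Microlocally on the characteristic set, after a semiclassical cutoff in the transverse momentum at the natural scale $h^{-\mez}$, $\Psi$ has rescaled $\tilde y$-frequency of order one, so that fiberwise in $x$ the function $\Psi(x,\cdot)$ is uniformly in $H^1_{\tilde y}(\xR^m)$. A classical Sobolev embedding in $\xR^m$ then yields $\int_{|\tilde y|<\alpha}|\Psi(x,\tilde y)|^2\,d\tilde y\les\alpha^{\min(2,m)}\|\Psi(x,\cdot)\|_{H^1_{\tilde y}}^2$, which after integration in $x$ gives $\s=1$ for $m\ge 3$.

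The main obstacle is the endpoint $m=2$, where the failure of $H^1(\xR^2)\hookrightarrow L^\infty$ by a logarithmic factor forces the $\s=1^-$ loss; this is handled either by an Orlicz-type refinement of the Sobolev embedding, or by real interpolation between the $m=1$ estimate ($\s=\mez$) and the $m=3$ estimate ($\s=1$), each producing an arbitrarily small $\alpha^{\eps}$ concession. Throughout, one must carefully bookkeep the error terms arising from the Fermi-coordinate curvature remainders and from commutators with the transverse semiclassical cutoffs, and verify that each error can be absorbed into the right-hand side $h^{-1}\|g\|_{L^2}$; this is routine but delicate semiclassical symbolic calculus, and is where the parameter dependence in $h$ has to be watched most carefully.
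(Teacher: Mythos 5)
Your codimension-one case is essentially sound: slicing the tube by the parallel hypersurfaces and invoking, uniformly in the slice, the known quasimode restriction bound $\|\psi|_{\Sigma_s}\|_{L^2(\Sigma_s)}\lesssim h^{-1/4}\bigl(\|\psi\|_{L^2(M)}+h^{-1}\|g\|_{L^2(M)}\bigr)$ does give $\alpha^{1/2}$ after integration over $|s|<\alpha h^{1/2}$ (note this restriction estimate is itself a theorem of Burq--G\'erard--Tzvetkov type, proved with exactly the machinery the paper deploys). But for codimension $m\geq 2$, which is the heart of the theorem, there is a genuine gap. A quasimode of $h^2\Delta_g+1$ is microlocalized on $\{|\xi|_g\approx 1\}$, but nothing constrains how the momentum is shared between tangential and transverse directions: the transverse frequency can be as large as $h^{-1}$ (think of plane-wave type quasimodes crossing $\Sigma^k$ transversally). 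Hence after the rescaling $y=h^{1/2}\tilde y$ the function $\Psi(x,\cdot)$ is \emph{not} uniformly in $H^1_{\tilde y}$; that holds only for the piece you retain after ``a semiclassical cutoff in the transverse momentum at the natural scale $h^{-1/2}$,'' and the complementary piece --- transverse frequencies between $h^{-1/2}$ and $h^{-1}$, comprising both the transversally propagating and, worse, the glancing directions --- is never estimated in your proposal. Slicing by translates of $\Sigma^k$ does not rescue it either, since the restriction bounds in codimension $m\geq 2$ carry losses $\lambda^{(m-1)/2}$ that are not compensated by the tube volume.

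That missing regime is precisely where the real work lies. In the paper it is handled, for curves, by splitting microlocally according to which $\xi$-derivative of the symbol is elliptic: a transverse evolution/energy estimate over the interval of length $\alpha h^{1/2}$ when the transverse momentum is of size one, and semiclassical Strichartz estimates (with the evolution variable along the curve) for the glancing directions; in general codimension it is handled globally by reducing to the smoothed spectral projector, writing its kernel as a BGT oscillatory integral with phase the geodesic distance, running a $TT^*$ argument, decomposing dyadically in $|x-x'|$, and applying Stein-type oscillatory integral operator bounds with partial rank. A telling symptom that your mechanism is not the right one: in the paper the logarithmic loss at $k=n-2$ comes from summing the dyadic pieces, not from the failure of $H^1(\xR^2)\hookrightarrow L^\infty$, so the proposed fixes at $m=2$ (an Orlicz refinement, or ``interpolation between the $m=1$ and $m=3$ estimates,'' which in any case are statements about different submanifolds rather than endpoints of an interpolation scale for a fixed operator) are aimed at the wrong obstruction. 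To complete your approach you would need to supply a genuine argument --- dispersive, energy, or oscillatory-integral based --- for the transverse-frequency range $[h^{-1/2},h^{-1}]$, with uniform dependence on the dyadic transverse-frequency scale.
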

 Here $1^-$ means that we have a logarithm loss i.e. a bound by $C \alpha \vert \log (\alpha) \vert.$
 \begin{rema}
 As pointed to us by M. Zworski, the result above is {\em not} invariant by conjugation by Fourier integral operators. Indeed, it is well known that micro locally, $-h^2 \Delta -1$ is conjugated by a (micro locally unitary) FIO to the operator $hD_{x_1}$. However the result above is clearly false is one replaces the operator $-h^2 \Delta -1$ by $hD_{x_1}$
 \end{rema}
 Another motivation for our study was the question of stabilization for weakly damped wave equations.  \begin{equation}\label{damped}
   (\partial_t^2 - \Delta_g+ b(x) \partial_t )u =0, (u, \partial_tu ) \mid_{t=0} ( u_0, u_1)\in H^{s+1}(M) \times H^s(M), 
   \end{equation}
where $0\leq b \in L^\infty (M)$. Let 
$$ E(u)( t) = \int_{M} \big\{g_p(\nabla_g u(p), \nabla_g u(p)) + | \partial_t u(p)|^2 \big\}dv_g(p)
$$
where $\nabla_g$ denotes the gradient with respect to the metric $g$.

It is known that as soon as the damping $b\geq 0$ is non trivial, the energy of  every solution converge to $0$ as $t$ tends to infinity.  On the other hand the rate of decay is {\em uniform} (and hence exponential) in energy space if and only if the {\em geometric control condition}~\cite{BaLeRa92, BuGe96} is satisfied. Here we want to explore the question when some trajectories are trapped and exhibit decay rates (assuming more regularity on the initial data).  This latter question was previously studied in a general setting in~\cite{Le96} and on tori in ~\cite{BuHi05, Ph07, AL14} (see also~\cite{BuZw03, BuZw03-1}) and more recently by Leautaud-Lerner~\cite{LeLe14}.   
According to the works by Borichev-Tomilov~\cite{BoTo10}, stabilization results for the wave  equation are equivalent to resolvent estimates. On the other hand,  Theorem~\ref{quasi-mode} implies easily (see Section~\ref{se.2.2}) the following resolvent estimate
\begin{coro}\label{corsimple}
Consider for $h>0$ the following operator
 \begin{equation}\label{L}
 L_h= - h^2 \Delta_g -1 + i h b, \quad   b \in L^\infty(M). 
\end{equation}
   Assume that there exists a {\em global} compact submanifold $\Sigma^k\subset M$ of dimension $k$ such that 
\begin{equation}\label{hyp-b}
  b(p) \geq C d( p, \Sigma^k)^{2\kappa}, \quad p \in M 
  \end{equation} 
for some $\kappa  >0.$ Then there exist  $C>0, h_0>0$ such that for all $ 0<h\leq h_0$ 
$$\Vert \varphi  \Vert_{L^2(M)} \leq C h^{-(1+ \kappa)} \Vert L_h \varphi \Vert_{L^2(M)},$$
  for all  $\varphi  \in H^2(M)$.
  \end{coro}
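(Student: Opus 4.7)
\textbf{Plan of proof of Corollary \ref{corsimple}.}

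The strategy is to split $\|\varphi\|_{L^2(M)}^2$ into its contribution from a tubular neighborhood $N_\beta$ of $\Sigma^k$ and its complement, estimate the former by Theorem~\ref{quasi-mode} and the latter by the dissipation identity coming from the damping term.

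Write $g = L_h\varphi$, so that $(h^2\Delta_g+1)\varphi = ihb\varphi - g$. Applying Theorem~\ref{quasi-mode} with right--hand side $ihb\varphi-g$, and using $b\in L^\infty$, gives
\begin{equation*}
\|\varphi\|_{L^2(N_{\alpha h^{1/2}})}\le C\alpha^\sigma\Big(\|\varphi\|_{L^2(M)}+\tfrac{1}{h}\|g\|_{L^2(M)}\Big).
\end{equation*}
On the other hand, taking the imaginary part of $\langle L_h\varphi,\varphi\rangle_{L^2(M)}$ yields the standard damping identity
\begin{equation*}
h\int_M b(p)|\varphi(p)|^2\,dv_g(p)=\IM\langle L_h\varphi,\varphi\rangle\le \|g\|_{L^2(M)}\|\varphi\|_{L^2(M)},
\end{equation*}
and the assumption $b(p)\ge C\,d(p,\Sigma^k)^{2\kappa}$ then gives, on the complement $M\setminus N_\beta$ where $d(p,\Sigma^k)\ge\beta$,
\begin{equation*}
\|\varphi\|_{L^2(M\setminus N_\beta)}^2\le \frac{1}{\beta^{2\kappa}}\int_{M\setminus N_\beta} d(\cdot,\Sigma^k)^{2\kappa}|\varphi|^2\,dv_g\le \frac{C}{h\,\beta^{2\kappa}}\|g\|_{L^2(M)}\|\varphi\|_{L^2(M)}.
\end{equation*}

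Now I would choose $\beta=\alpha h^{1/2}$ with $\alpha$ a small absolute constant (fixed independently of $h$) so that, since $\sigma>0$, the factor $C\alpha^{2\sigma}$ from the quasimode estimate is $\le \tfrac{1}{2}$. Adding the two pieces,
\begin{equation*}
\|\varphi\|^2_{L^2(M)}\le \tfrac{1}{2}\|\varphi\|_{L^2(M)}^2+\frac{C}{h^2}\|g\|_{L^2(M)}^2+\frac{C}{h^{1+\kappa}}\|g\|_{L^2(M)}\|\varphi\|_{L^2(M)},
\end{equation*}
where the $h^{-1-\kappa}$ comes from $h\beta^{2\kappa}=\alpha^{2\kappa}h^{1+\kappa}$. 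A Young inequality to absorb the cross term into $\tfrac14\|\varphi\|^2$ then yields, for $h$ small enough (so that $h^{-2}\le h^{-(2+2\kappa)}$),
\begin{equation*}
\|\varphi\|_{L^2(M)}\le C h^{-(1+\kappa)}\|L_h\varphi\|_{L^2(M)},
\end{equation*}
as required.

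The only subtle issue is that the proof genuinely needs $\sigma>0$ in Theorem~\ref{quasi-mode}, and in the borderline case $k=n-2$ where $\sigma=1^-$ (meaning $C\alpha|\log\alpha|$) one must verify that this logarithmic factor still becomes $\le\tfrac12$ for $\alpha$ small; this is immediate. The rest is bookkeeping: choosing $\alpha$ first, then $h$ small, and applying Young's inequality.
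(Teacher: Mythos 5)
Your argument is correct and is essentially the paper's own proof: you split $\|\varphi\|_{L^2}$ over $N_{\alpha h^{1/2}}$ and its complement, estimate the tube via Theorem~\ref{quasi-mode} applied to $(h^2\Delta_g+1)\varphi=ihb\varphi-g$, control the complement via the imaginary-part (damping) identity of Lemma~\ref{energie} together with $b\geq C\alpha^{2\kappa}h^{\kappa}$ there, and then fix $\alpha$ small and absorb the cross term by Young's inequality. The only differences from the paper are cosmetic (you square the estimates before absorbing, while the paper works with the norms directly), so nothing further is needed.
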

  This result  will  imply the following one. 
\begin{thm}\label{example}
Under the geometric assumptions of Corollary~\ref{corsimple},  there exists $C>0$ such that for any $(u_0, u_1) \in   H^{2}(M) \times H^1(M)$, the solution $u$ of~\eqref{damped} satisfies 
\begin{equation}
\label{taux}
 E (u (t))^\mez \leq \frac{ C} {t^{\frac 1 \kappa}} \bigl( \| u_0\|_{H^{2}}+ \|u_1\|_{H^1}\bigr).
 \end{equation}
\end{thm}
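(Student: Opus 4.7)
The strategy is to invoke the Borichev--Tomilov characterization of polynomial energy decay~\cite{BoTo10}: for a bounded $C_0$-semigroup with generator $\mathcal{A}$ on a Hilbert space $\mathcal{H}$ such that the imaginary axis is contained in $\rho(\mathcal{A})$, a resolvent bound $\lA(\mathcal{A}-i\lambda)^{-1}\rA_{\mathcal{H}\to\mathcal{H}} = O(|\lambda|^\kappa)$ as $|\lambda|\to\infty$ is equivalent to the semigroup decay $\lA e^{t\mathcal{A}}\mathcal{A}^{-1}\rA_{\mathcal{H}\to\mathcal{H}} = O(t^{-1/\kappa})$. The task therefore reduces to recasting~\eqref{damped} in this framework and upgrading the $L^2$-bound of Corollary~\ref{corsimple} to an energy-space resolvent bound with the correct exponent.

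First I would rewrite~\eqref{damped} as the first-order evolution $\partial_t U = \mathcal{A} U$ on the energy space $\mathcal{H} = H^1(M)\times L^2(M)$ (quotiented by the one-dimensional kernel of constants so that $\mathcal{A}$ is invertible), with $U = (u,\partial_t u)^T$ and $\mathcal{A} = \bigl(\begin{smallmatrix} 0 & \mathrm{Id} \\ \Delta_g & -b\end{smallmatrix}\bigr)$. A standard dissipation calculation shows $\mathcal{A}$ generates a contraction semigroup with $\lA e^{t\mathcal{A}}U_0\rA_\mathcal{H}^2 = E(u)(t)$. To check that the imaginary axis lies in $\rho(\mathcal{A})$, note that an eigenpair at $i\lambda$ with $\lambda\neq 0$ would be of the form $(u, i\lambda u)$ with $(-\Delta_g - \lambda^2 + i\lambda b)u = 0$; pairing with $\bar u$ and taking the imaginary part forces $\int b|u|^2 = 0$, hence $u\equiv 0$ a.e.\ on $\{b>0\}\supset M\setminus\Sigma^k$ by~\eqref{hyp-b}, and since $\Sigma^k$ has measure zero this forces $u\equiv 0$.

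Given $F = (f_1,f_2)\in\mathcal{H}$ and $U = (u,v)$ with $(\mathcal{A}-i\lambda)U = F$, the first component gives $v = i\lambda u + f_1$ and substitution in the second yields
\begin{equation*}
(-\Delta_g - \lambda^2 + i\lambda b)u = -f_2 - (b+i\lambda)f_1.
\end{equation*}
Setting $h = |\lambda|^{-1}$ so that the left-hand side equals $\lambda^2 L_h u$, Corollary~\ref{corsimple} translates into $\lA u\rA_{L^2}\lesssim|\lambda|^\kappa\lA F\rA_\mathcal{H}$. Combining this with the multiplier identity obtained by pairing the Helmholtz equation with $\bar u$ (to recover $\lA\nabla u\rA_{L^2}$) and with the relation $v = i\lambda u + f_1$ (for $\lA v\rA_{L^2}$) yields the desired energy resolvent bound $\lA(\mathcal{A}-i\lambda)^{-1}\rA_{\mathcal{H}\to\mathcal{H}} = O(|\lambda|^\kappa)$. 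Borichev--Tomilov then produces $\lA e^{t\mathcal{A}}\mathcal{A}^{-1}\rA_{\mathcal{H}\to\mathcal{H}} = O(t^{-1/\kappa})$, and the decomposition $U(t) = e^{t\mathcal{A}}\mathcal{A}^{-1}\cdot\mathcal{A} U_0$ with $\lA\mathcal{A} U_0\rA_\mathcal{H}\lesssim \lA u_0\rA_{H^2}+\lA u_1\rA_{H^1}$ delivers~\eqref{taux}.

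The main obstacle is the sharpness of this last conversion. The source term $(b+i\lambda)f_1$ in the Helmholtz equation is of size $|\lambda|\lA f_1\rA_{L^2}$, and a naive use of the $L^2$ resolvent bound would propagate one extra power of $|\lambda|$ when recovering $\lA\nabla u\rA_{L^2}$ and $\lA v\rA_{L^2}$, producing only the weaker rate $t^{-1/(1+\kappa)}$. To reach the announced exponent one has to exploit the $H^1$ regularity of $f_1$ together with the dissipation identity $\mathrm{Im}\langle L_h\varphi,\varphi\rangle = h\int b|\varphi|^2\,dv_g$, typically by pairing the equation with $\bar v$ rather than $\bar u$ so as to transfer the derivative loss from the source to the dissipation term.
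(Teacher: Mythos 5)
Your global strategy coincides with the paper's: after the rescaling $h=1/\lambda$, Corollary~\ref{corsimple} gives exactly the stationary bound $\lambda^{1-\kappa}\Vert u\Vert_{L^2}\le C\Vert(-\Delta_g-\lambda^2+i\lambda b)u\Vert_{L^2}$, and the paper concludes in one line by quoting \cite[Proposition 1.5]{LeLe14}, which asserts that this bound is \emph{equivalent} to \eqref{taux} (that proposition itself resting on Borichev--Tomilov \cite{BoTo10}). The genuine gap in your write-up is precisely the step you yourself label ``the main obstacle'' and leave unresolved: upgrading the $L^2$ bound $\Vert u\Vert_{L^2}\lesssim\lambda^{\kappa-1}\Vert P_\lambda u\Vert_{L^2}$, with $P_\lambda=-\Delta_g-\lambda^2+i\lambda b$, to the energy-space bound $\Vert(\mathcal{A}-i\lambda)^{-1}\Vert_{\mathcal{H}\to\mathcal{H}}=O(\lambda^{\kappa})$ with the \emph{same} power. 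Your third paragraph claims this ``yields the desired bound'', your fourth paragraph correctly concedes that the naive treatment of the source $-f_2-(b+i\lambda)f_1$ loses a full power of $\lambda$ and only gives $t^{-1/(1+\kappa)}$, and the remedy you gesture at (pairing the Helmholtz equation with $\bar v$ and using the dissipation) is neither carried out nor obviously sufficient: since $v=i\lambda u+f_1$, that pairing produces the cross term $\lambda^{2}\operatorname{Re}\langle f_1,u\rangle$, again of size $\lambda^{1+\kappa}\Vert F\Vert_{\mathcal{H}}^{2}$, which is not absorbable for general $\kappa>0$. As written, the proposal therefore only proves the weaker decay rate.

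This sharp conversion is exactly the nontrivial content of the quoted \cite[Proposition 1.5]{LeLe14} (see also \cite[Proposition 2.4]{AL14}); the paper deliberately black-boxes it, and citing it is the quickest repair. If you want a self-contained argument, a clean route is duality: eliminating $u$ rather than $v$ gives $P_\lambda v=-\Delta_g f_1-i\lambda f_2$; since $P_\lambda^{*}w=\overline{P_\lambda\bar w}$, the adjoint obeys the same $L^2\to L^2$ bound, and combining it with the elementary inequality $\Vert\nabla_g w\Vert_{L^2}^{2}\le\lambda^{2}\Vert w\Vert_{L^2}^{2}+\vert\langle P_\lambda^{*}w,w\rangle\vert$ gives $\Vert(P_\lambda^{*})^{-1}\Vert_{L^2\to H^1}=O(\lambda^{\kappa})$, hence by duality $\Vert P_\lambda^{-1}\Vert_{H^{-1}\to L^2}=O(\lambda^{\kappa})$. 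Applied to $-\Delta_g f_1\in H^{-1}$ (whose norm is $\lesssim\Vert f_1\Vert_{H^1}$) this yields $\Vert v\Vert_{L^2}\lesssim\lambda^{\kappa}\Vert F\Vert_{\mathcal{H}}$; then $\lambda\Vert u\Vert_{L^2}\le\Vert v\Vert_{L^2}+\Vert f_1\Vert_{L^2}$ and the pairing of $P_\lambda u$ with $\bar u$ give $\Vert\nabla_g u\Vert_{L^2}\lesssim\lambda^{\kappa}\Vert F\Vert_{\mathcal{H}}$, i.e. the required resolvent bound. The remaining ingredients of your sketch (contraction semigroup on the quotiented energy space, $i\xR\subset\rho(\mathcal{A})$ via \eqref{hyp-b} together with compactness of the resolvent, and $\Vert\mathcal{A}U_0\Vert_{\mathcal{H}}\lesssim\Vert u_0\Vert_{H^2}+\Vert u_1\Vert_{H^1}$) are fine.
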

\begin{rema}
Notice that in Theorem~\ref{example} the decay rate is worst than the rates exhibited by Leautaud-Lerner~\cite{LeLe14} in the particular case when the submanifold $\Sigma$ is a torus (and the metric of $M$ is flat near $\Sigma$). We shall exhibit  below examples showing  that the rate~\eqref{taux} is optimal in general.
\end{rema}
A main drawback of the result above (and Leautaud-Lerner's results) is that we were led to {\em global} assumptions on the geometry of the manifold $M$ and the trapped region $\Sigma^k$. However, the flexibility of Theorem~\ref{quasi-mode} is such that we can actually dropp all {\em global} assumptions and keep only a {\em local} weak controlability assumption.
 \begin{thm}\label{stabi}
Let us assume the following {\em weak geometric control property}: 
for any $\rho_0 = ( p_0, \xi_0) \in S^* M$, there exists $s\in \xR$ such that the point $(p_1, \xi_1)= \Phi(s) (\rho_0)$ on the bicharacteristic issued from $\rho_0$ satisfies 
\begin{itemize}
\item either $p_1 \in \omega= \cup \{ U \text{ open }; \text{essinf} _U \, b  >0 \}$ 
\item or there exists $\kappa>0, C>0$ and a local submanifold $\Sigma^k$ of dimension $k\geq 1$ such that $p_1\in \Sigma^k$ and   near $p_1$,  
$$b(p) \geq Cd(p, \Sigma^k)^{2\kappa}. $$ 
\end{itemize}

Notice that since $S^*M$ is compact,  we can assume in the assumption above that $s\in [-T, T]$ is bounded and that a finite number of submanifolds (and kappa's) are sufficient. Let $\kappa_0$ be the largest. Then   there exists $C >0$ such that for any $(u_0, u_1) \in   H^{2}(M) \times H^1(M)$, the solution $u$ of~\eqref{damped} satisfies 
$$ E (u (t))^\mez \leq \frac{ C } {t^{\frac {1} {\kappa_0}}} \bigl( \| u_0\|_{H^{2}}+ \|u_1\|_{H^1}\bigr).$$
\end{thm}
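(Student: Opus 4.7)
The plan is to reduce Theorem~\ref{stabi} to a semiclassical resolvent estimate for the operator $L_h = -h^2\Delta_g - 1 + ihb$, exactly as in the proof of Theorem~\ref{example}. By the Borichev--Tomilov theorem the announced decay $t^{-1/\kappa_0}$ is equivalent to the bound
\[
\|\varphi\|_{L^2(M)} \le C h^{-(1+\kappa_0)}\|L_h \varphi\|_{L^2(M)},\qquad \varphi \in H^2(M),\ 0 < h \le h_0.
\]
The new difficulty compared to Corollary~\ref{corsimple} is that the damping lower bound is only \emph{local}, valid near each of finitely many submanifolds $\Sigma_j$ with exponents $\kappa_j\le\kappa_0$, so a propagation-of-singularities argument is needed to cover all of $S^*M$.

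I would argue by contradiction: if the resolvent estimate failed, there would exist sequences $h_n\to 0$ and $\varphi_n \in H^2(M)$ with $\|\varphi_n\|_{L^2}=1$ and $\|L_{h_n}\varphi_n\|_{L^2}=o(h_n^{1+\kappa_0})$. The imaginary part of $\langle L_{h_n}\varphi_n,\varphi_n\rangle$ gives the absorption bound
\[
h_n\int_M b|\varphi_n|^2\,dv_g \le \|L_{h_n}\varphi_n\|_{L^2} = o(h_n^{1+\kappa_0}),
\]
hence $\int_M b|\varphi_n|^2\,dv_g = o(h_n^{\kappa_0})$. After extraction, $(\varphi_n)$ admits a semiclassical defect measure $\mu$ on $T^*M$; ellipticity of $h^2\Delta_g+1$ away from the cosphere bundle and the standard $h$-oscillation argument show that $\mu$ is a probability measure supported on $S^*M$ and invariant under the bicharacteristic flow.

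For each $\rho_0 \in S^*M$ the weak geometric control assumption produces a time $s\in[-T,T]$ such that $\rho_1 = \Phi(s)(\rho_0)$ projects either (i) to a point in $\omega$, or (ii) to a point $p_1\in\Sigma_j$ near which $b(p)\ge C d(p,\Sigma_j)^{2\kappa_j}$. In case (i) the absorption bound immediately forces $\mu$ to vanish near $\rho_1$. In case (ii) I would take a smooth cutoff $\chi$ supported in a chart around $p_1$ on which the local bound on $b$ holds and apply a local version of Theorem~\ref{quasi-mode} to $\chi\varphi_n$, noting that
\[
(h_n^2\Delta_g+1)(\chi\varphi_n) = -\chi L_{h_n}\varphi_n + ih_n\chi b\varphi_n + [h_n^2\Delta_g,\chi]\varphi_n
\]
has $L^2$ norm $O(h_n)$ uniformly in $n$. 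Combined with the pointwise bound $b(p)\ge C(\alpha h_n^{1/2})^{2\kappa_j}$ outside $N_{\alpha h_n^{1/2}}$ this gives
\[
\|\chi\varphi_n\|_{L^2}^2 \le C\alpha^{2\sigma} + \frac{1}{C\alpha^{2\kappa_j}h_n^{\kappa_j}}\int_M b|\varphi_n|^2\,dv_g = C\alpha^{2\sigma} + o\bigl(\alpha^{-2\kappa_j}h_n^{\kappa_0-\kappa_j}\bigr).
\]
Since $\kappa_j\le\kappa_0$, for each fixed $\alpha$ the second term tends to $0$, and letting then $\alpha\to 0$ yields $\chi\varphi_n\to 0$ in $L^2$, so $\mu$ again vanishes near $\rho_1$. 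Invariance of $\mu$ under the flow on the compact interval $[-T,T]$ transports this vanishing back to $\rho_0$, and since $\rho_0$ was arbitrary we reach $\mu\equiv 0$, contradicting $\mu(T^*M)=1$.

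The main obstacle is the localization in case (ii): one has to verify that Theorem~\ref{quasi-mode} admits a local form suitable for $\chi\varphi_n$, and that the commutator $[h_n^2\Delta_g,\chi]\varphi_n$ and the damping term $ih_n\chi b\varphi_n$ can be placed inside the $h^{-1}\|g\|_{L^2}$ slot of~\eqref{qm-N} without destroying the $\alpha^\sigma$ gain. Once that local version is in hand, a partition of unity subordinate to the cover of $S^*M$ by the sets provided by the weak control condition, together with flow invariance of $\mu$ on the bounded time window $[-T,T]$, delivers exactly the exponent $1/\kappa_0$ announced in the theorem.
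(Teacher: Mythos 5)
Your proposal is correct and follows essentially the same route as the paper: reduction via Borichev--Tomilov to the resolvent estimate, a contradiction argument with a semiclassical defect measure $\mu$ supported on $S^*M$ and invariant under the flow, elimination of $\supp\mu$ near points over $\omega$ by the absorption bound, and near the local submanifolds by splitting into $N_{\alpha h^{1/2}}$ (handled by Theorem~\ref{quasi-mode}) and its complement (handled by $b\geq C(\alpha h^{1/2})^{2\kappa_j}$ together with $\int b|\varphi_n|^2=o(h_n^{\kappa_0})$), then letting $n\to\infty$ and $\alpha\to 0$. The only cosmetic difference is that you localize with a cutoff $\chi$ before invoking the quasimode estimate, whereas the paper applies Theorem~\ref{quasi-mode} directly to $\varphi_n$ and simply restricts the $L^2$ norm to $\omega\cap N_{\alpha h_n^{1/2}}$, which removes the need to track the commutator term separately.
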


The results in Theorem~\ref{quasi-mode} are in general optimal. On spheres $\xS^n= \{ x\in \xR^{n+1} :  |x| =1\}$, an explicit family of eigenfunctions $ e_j(x_1, \dots, x_{{n+1}})= (x_1+ i x_2)^j$ (eigenvalues $\lambda_j^2 = j(j+n-1)$) is known. We have
\begin{equation}
\label{expo}
 | e_j (x)| ^2 = ( 1- |x'|^2)^j = e^{j \log( 1-|x'|^2)}, \quad x' = (x_3, \dots, x_{n+1}),
 \end{equation} and consequently, these eigenfunctions concentrate exponentially on $j^{-1/2}$ neighborhoods of the geodesic curve given by $\{ x \in \xS^n; x'=0\}$ (the equator). 
As a consequence, the sequence  $j^{\frac {d-1} 4} e_j$ is (asymptotically) normalized by a constant in $L^2( \xS^n)$,  and if $\Sigma^k$ contains the equator, we can see optimality. Indeed, we  work in local coordinates $(y, x')$ where $y \in \xT$ and $x'\in V$ close to $0$ in $ \xR^{n-1}$. This localization being licit since according to~\eqref{expo}, the fonction is $\mathcal{O}( e^{-\delta j})$ outside of a fixed neighborhood of the equator.  Let $h= j^{-1}$,Let us decompose $x'= (y', z') \in \xR^{k-1} \times \xR^{n-k}$ and consider the submanifold defined by $z'=0$.Then 
$$ \| e_j\|_{L^2(N_{\alpha h^{1/2}})}\sim \int_{y=0}^{1} \int_{|y'| \leq 1 } \int_{|z'| \leq \alpha h^{1/2}} j^{\frac {n-1} 2} e^{- j ( |y'|^2 + |z'|^2)} dy' dz' \sim \alpha ^{n-k}.$$
This elementary calculation shows that our results are saturated for all $\alpha >0$ on spheres (including the exponent of $\alpha$ appearing in~\eqref{qm-N}) by eigenfunctions in the case submanifolds of codimension $1$ or $2$ (except for the logarithmic loss appearing in the case of codimension $2$). On the other hand again on spheres, other particular families of eigenfunctions, $(f_j, \lambda_j)$ are known (the so called zonal spherical harmonics). These are known to have size of order \smash{$\lambda_j^{(n-1)/ 2}$} in a neighborhood of size $\lambda_j ^{-1}$ of two antipodal points (north and south poles). 
As a consequence, a simple calculation shows that if the submanifold contains such a point (which if always achievable by rotation invariance), we have, for $\alpha = \epsilon h^{1/2}$
$$\| f_j\|^2_{L^2(N_{\alpha h^{1/2}})} \geq c h\sim  \alpha^2,$$
which shows that ~\eqref{qm-N} is optimal on spheres (at least in the regime $\alpha \sim h^{1/2}$).
To get the full optimality might be possible by studying other families of spherical harmonics. For general manifolds, following the analysis in~\cite[Section 5]{BuGeTz05}) should give the optimality of our results for {\em quasi-modes} on any manifold.

The paper is organized as follows. We first show how the non concentration result (Theorem~\ref{quasi-mode}) imply resolvent estimates for the damped Helmholtz equation, which in turn imply very classically the stabilization results for the damped wave equation. We then focus on the core of the article and prove Theorem~\ref{quasi-mode}. We start with the case of curves for which  we have an alternative proof. Then we focus on the general case. We first show that the resolvent estimate is implied by a similar estimate for the spectral projector. To prove this latter estimate, we rely harmonic analysis and the precise description of the spectral projector given in~\cite{BuGeTz05}. Finally, we gathered in an appendix several  technical results. 

{\bf Acknowledgments} We'd like to thank M. Zworski for fruitful discussions about these results.

\section{From concentration estimates to stabilization results}
\subsection{{\em A priori} estimates}
Recall that $(M,g)$ is a compact connected Riemanian manifold. We shall denote by $\nabla_g$ the gradient operator with respect to the metric $g$ and by $dv_g$ the canonical volume form on $M.$   In all this section we set
\begin{equation}\label{op-L}
L_h = -h^2 \Delta_g -1 +ihb
\end{equation}
We shall first derive some a-priori estimates on $L_h.$ 
 \begin{lemm}\label{energie}
 Let $ L_h= - h^2 \Delta_g -1 + i h b.$
 Assume $b \geq 0 $ and set $f  = L_h \varphi $. Then
 \begin{equation}
 \begin{aligned}
 &(i) \quad h \int_M b \vert \varphi (p) \vert^2\, dv_g(p) \leq \Vert \varphi  \Vert_{L^2(M)}\Vert f  \Vert_{L^2(M)},\\
 &(ii) \quad h^2\int_Mg_p \big (\nabla_g \varphi (p), \overline{\nabla_g \varphi (p)} \big) \, dv_g(p) \leq  \Vert \varphi  \Vert^2_{L^2(M)} +\Vert \varphi  \Vert_{L^2(M)}\Vert f  \Vert_{L^2(M)}.
\end{aligned}
\end{equation}
   \end{lemm}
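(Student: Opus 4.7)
The plan is to extract both estimates from the single $L^2$ pairing $\langle L_h\varphi,\varphi\rangle_{L^2(M)} = \langle f,\varphi\rangle_{L^2(M)}$ by separating real and imaginary parts. Since $M$ is compact without boundary and $b$ is real-valued, the operator $-h^2\Delta_g - 1$ is self-adjoint while $ihb$ is anti-self-adjoint, so this decomposition cleanly isolates the two desired quantities.

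First, I would apply Green's identity (with no boundary term, since $\partial M = \emptyset$) to rewrite
\begin{equation*}
 -h^2 \langle \Delta_g \varphi, \varphi \rangle_{L^2(M)} = h^2 \int_M g_p\bigl(\nabla_g \varphi(p),\overline{\nabla_g \varphi(p)}\bigr)\,dv_g(p).
\end{equation*}
Combining with the remaining two terms of $L_h$ gives
\begin{equation*}
 \langle f,\varphi\rangle_{L^2(M)} = h^2 \int_M |\nabla_g\varphi|_g^2\,dv_g - \int_M |\varphi|^2\,dv_g + ih\int_M b|\varphi|^2\,dv_g.
\end{equation*}

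Taking imaginary parts, since the first two terms on the right are real, yields
\begin{equation*}
 h\int_M b|\varphi|^2\,dv_g = \IM \langle f,\varphi\rangle_{L^2(M)} \leq |\langle f,\varphi\rangle_{L^2(M)}| \leq \|\varphi\|_{L^2(M)}\|f\|_{L^2(M)},
\end{equation*}
which is (i). Taking real parts gives
\begin{equation*}
 h^2 \int_M |\nabla_g\varphi|_g^2\,dv_g = \|\varphi\|_{L^2(M)}^2 + \RE\langle f,\varphi\rangle_{L^2(M)} \leq \|\varphi\|_{L^2(M)}^2 + \|\varphi\|_{L^2(M)}\|f\|_{L^2(M)},
\end{equation*}
which is (ii).

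There is essentially no obstacle here beyond justifying the integration by parts; since $\varphi \in H^2(M)$ and $M$ is smooth, compact, and boundaryless, Green's identity applies directly and no cut-off or density argument is needed. The hypothesis $b \geq 0$ is used only implicitly, to ensure the left-hand side of (i) is a nonnegative quantity matching its absolute-value bound.
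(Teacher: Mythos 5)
Your proposal is correct and is essentially the paper's own argument: both multiply the equation by $\overline{\varphi}$, integrate by parts using $\Delta_g = \operatorname{div}\nabla_g$ on the boundaryless manifold, and obtain (i) and (ii) by taking imaginary and real parts of the resulting identity, followed by Cauchy--Schwarz. The only cosmetic difference is that the paper rewrites the equation as $-h^2\Delta_g\varphi = f + \varphi - ihb\varphi$ before pairing, which is the same computation.
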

\begin{proof}
We know that $\Delta_g = \text{div} \nabla_g $ and by the definition of these objects we have
$$ A=: \int_Mg_p \big (\nabla_g\varphi (p), \overline{\nabla_g \varphi (p)} \big) \, dv_g(p) = - \int_M \Delta_g \varphi (p) \overline{\varphi (p)}\, dv_g(p).$$
Multipying both sides by $h^2$ and since $- h^2 \Delta_g\varphi = f + \varphi  - i h b \varphi$  we obtain
$$h^2A =  \int_M  \vert  \varphi (p)\vert^2 \, dv_g(p) - ih  \int_M  b(p) \vert  \varphi (p)\vert^2 \, dv_g(p) + \int_M f (p)\overline{\varphi (p)} \, dv_g(p).$$
Taking the real  and the imaginary parts of this equality we obtain   the desired estimates.
  \end{proof}
  \subsection{Proof of Corollary \ref{corsimple} assuming Theorem \ref{quasi-mode}} \label{se.2.2}
 According to condition \eqref{hyp-b} we have  on $ N_{\smash{\alpha h^{1/2}}}^c$ 
$$b(p) \geq C  d(p, \Sigma^k)^{2\kappa} \geq C  \alpha^{2 \kappa} h^\kappa.$$
Writing $\int_{ N_{\smash{\alpha h^{1/2}}}^c} \vert \varphi (p)\vert^2\, dv_g(p) = \int_{ N_{\smash{\alpha h^{1/2}}}^c} \frac{1}{b(p)} b(p)\vert \varphi (p)\vert^2\, dv_g(p),$
 we deduce from Lemma \ref{energie} that
\begin{equation}\label{est-Nc}
\int_{ N_{\smash{\alpha h^{1/2}}}^c} \vert \varphi (p)\vert^2\, dv_g(p)   \leq  \frac{1}{C  \alpha^{2 \kappa}}h^{-(1+ \kappa)} \Vert \varphi \Vert_{L^2(M)}\Vert f \Vert_{L^2(M)}.
\end{equation}
   Therefore we are left with the estimate of the $L^2(N_{\alpha h^{1/2}})$ norm of $\varphi.$  
 
According to \eqref{op-L} we see that $\varphi $ is a solution of 
$$(h^2 \Delta_g   +1)\varphi  = -f  + i hb\varphi  =:g_h$$
where $g_h$ satisfies
$$\Vert g_h \Vert_{L^2(M)} \leq \Vert f  \Vert_{L^2(M)} + Ch \Vert \varphi \Vert_{L^2(M)}.$$
It follows from \eqref{est-Nc} and Theorem \ref{quasi-mode} that 
$$
  \Vert \varphi  \Vert_{L^2(M)} \leq   \frac{1}{C^\mez  \alpha^\kappa} h^{-\frac{1+ \kappa}{2}} \Vert \varphi  \Vert^\mez_{L^2(M)}\Vert f  \Vert^\mez_{L^2(M)} 
+    C \alpha^\sigma  (\Vert \varphi  \Vert_{L^2(M)}  + \frac{1}{h} \Vert f  \Vert_{L^2(M)}).
$$
Now we fix   $\alpha$ so small that $C  \alpha^\sigma \leq \mez$ and we use the inequality $a^\mez b^\mez \leq \eps a + \frac{1}{4 \eps} b$ to obtain eventually
$$\Vert \varphi   \Vert_{L^2(M)} \leq C'h^{-(1+ \kappa)}\Vert f \Vert_{L^2(M)}$$
which completes the proof of Corollary \ref{corsimple}.
\subsection{Proof of Theorem \ref{example} assuming Corollary \ref{corsimple}}
 The proof  is an immediate consequence of   a work by Borichev-Tomilov~\cite{BoTo10} and  Corollary \ref{corsimple}. We quote the following proposition from  \cite[Proposition 1.5]{LeLe14}.
 \begin{prop}
 Let $\kappa >0.$ Then the estimate \eqref{taux} holds if and only if there exist positive constants $C, \lambda_0$ such that for all $u \in H^2(M)$, for all $ \lambda \geq \lambda_0$ we have
 $$C \Vert (- \Delta_g  -\lambda^2 + i\lambda b )u \Vert_{L^2(M)} \geq \lambda^{1- \kappa} \Vert u \Vert_{L^2(M)}.$$
\end{prop}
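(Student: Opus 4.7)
The statement is a Borichev–Tomilov type equivalence between polynomial energy decay for the damped wave semigroup and a polynomial resolvent bound on the imaginary axis. The plan is to recast \eqref{damped} as a first order system, invoke the Borichev–Tomilov theorem from \cite{BoTo10}, and then translate the ``abstract'' resolvent estimate into the concrete estimate on the shifted damped Helmholtz operator $-\Delta_g-\lambda^2+i\lambda b$.

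First, I would set $\mathcal{H}=H^{1}(M)\times L^{2}(M)$ endowed with the natural energy norm, and rewrite \eqref{damped} as $\partial_t U=\mathcal{A}U$ with $U=(u,\partial_t u)$ and
$$
\mathcal{A}=\begin{pmatrix}0 & I \\ \Delta_g & -b\end{pmatrix},\qquad D(\mathcal{A})=H^{2}(M)\times H^{1}(M).
$$
Because $b\ge 0$ is non-trivial, $\mathcal{A}$ generates a contraction $C_0$-semigroup $T(t)$ on $\mathcal{H}$, and a standard unique continuation argument shows that $i\xR\subset\rho(\mathcal{A})$. Moreover $\mathcal{A}^{-1}$ is bounded from $\mathcal{H}$ to $D(\mathcal{A})$, so the statement $E(u(t))^{\mez}\lesssim t^{-1/\kappa}\bigl(\|u_0\|_{H^{2}}+\|u_1\|_{H^{1}}\bigr)$ is precisely $\|T(t)\mathcal{A}^{-1}\|_{\mathcal{H}\to\mathcal{H}}=O(t^{-1/\kappa})$.

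Next I would apply the Borichev–Tomilov theorem \cite{BoTo10}: for a bounded $C_0$-semigroup whose generator has imaginary axis in the resolvent set, one has for $\kappa>0$
$$
\|T(t)\mathcal{A}^{-1}\|_{\mathcal{H}\to\mathcal{H}}=O(t^{-1/\kappa})
\iff
\|(i\lambda-\mathcal{A})^{-1}\|_{\mathcal{H}\to\mathcal{H}}=O(|\lambda|^{\kappa}).
$$
So it only remains to show that the abstract resolvent bound on $\mathcal{A}$ is equivalent to the scalar lower bound $\|(-\Delta_g-\lambda^2+i\lambda b)u\|_{L^{2}}\gtrsim\lambda^{1-\kappa}\|u\|_{L^{2}}$. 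Given $(f,g)\in\mathcal{H}$, solving $(i\lambda-\mathcal{A})(u,v)=(f,g)$ gives $v=i\lambda u-f$ and
$$
(-\Delta_g-\lambda^2+i\lambda b)u=g+(i\lambda+b)f.
$$
Thus, from the scalar estimate, $\|u\|_{L^{2}}\le C\lambda^{\kappa-1}\|g+(i\lambda+b)f\|_{L^{2}}\le C\lambda^{\kappa}(\|f\|_{H^{1}}+\|g\|_{L^{2}})$.

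The main (routine) obstacle is then to upgrade the $L^{2}$ control of $u$ to the full $\mathcal{H}$ control: integrating by parts in $\langle-\Delta_g u,u\rangle$ and using $-\Delta_g u=\lambda^2 u-i\lambda b u+g+(i\lambda+b)f$, I would bound $\|\nabla_g u\|_{L^{2}}^{2}\lesssim\lambda^{2}\|u\|_{L^{2}}^{2}+\lambda\|u\|_{L^{2}}^{2}+\|u\|_{L^{2}}\|g+(i\lambda+b)f\|_{L^{2}}$, which combined with $v=i\lambda u-f$ yields $\|u\|_{H^{1}}+\|v\|_{L^{2}}\le C\lambda^{\kappa}(\|f\|_{H^{1}}+\|g\|_{L^{2}})$, i.e.\ the abstract resolvent bound. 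The reverse direction is immediate: applying the abstract bound to $(f,g)=(0,(-\Delta_g-\lambda^2+i\lambda b)u)$, one reads off $\|u\|_{H^{1}}\le C\lambda^{\kappa}\|(-\Delta_g-\lambda^2+i\lambda b)u\|_{L^{2}}$, which is stronger than the claimed $L^{2}\to L^{2}$ estimate. Combining all three steps, the equivalence follows, completing the proof of the proposition.
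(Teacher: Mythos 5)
The paper itself offers no proof here: the proposition is quoted verbatim from L\'eautaud--Lerner \cite{LeLe14} (Proposition 1.5), whose proof indeed goes through Borichev--Tomilov \cite{BoTo10} exactly along the lines you set up. So your strategy is the intended one, but two of your steps do not close, and one of them is a genuine loss of a power of $\lambda$. The serious gap is the passage from the scalar estimate to the abstract resolvent bound $\Vert (i\lambda-\mathcal{A})^{-1}\Vert_{\mathcal{H}\to\mathcal{H}}=O(\lambda^{\kappa})$. Writing $P(\lambda)=-\Delta_g-\lambda^2+i\lambda b$, your reduction gives $P(\lambda)u=g+(i\lambda+b)f$, and the hypothesis only yields $\Vert u\Vert_{L^2}\lesssim \lambda^{\kappa-1}(\Vert g\Vert_{L^2}+\lambda\Vert f\Vert_{L^2})$. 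But the $\mathcal{H}$-norm of $(u,v)$ contains $\Vert v\Vert_{L^2}$ with $v=i\lambda u-f$, essentially $\lambda\Vert u\Vert_{L^2}$, and your energy identity likewise bounds $\Vert\nabla_g u\Vert_{L^2}$ only by $\lambda\Vert u\Vert_{L^2}$ plus lower order; since $\lambda\Vert f\Vert_{L^2}$ is \emph{not} controlled by $\Vert f\Vert_{H^1}$, this argument only gives $\Vert(u,v)\Vert_{\mathcal{H}}\lesssim\lambda^{\kappa+1}\Vert(f,g)\Vert_{\mathcal{H}}$, hence via Borichev--Tomilov a decay rate $t^{-1/(\kappa+1)}$, strictly weaker than \eqref{taux}. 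The missing idea is to eliminate the bad source term before estimating: set $w=u+\frac{i}{\lambda}f$, so that $v=i\lambda w$ and $P(\lambda)w=g-\frac{i}{\lambda}\Delta_g f$, and upgrade the $L^2\to L^2$ bound on $P(\lambda)^{-1}$ to bounds $L^2\to H^1_\lambda$ and $H^{-1}_\lambda\to L^2$ of size $\lambda^{\kappa}$ (energy identity, then duality, using $P(\lambda)^{*}=\overline{P(\lambda)}$), where $H^1_\lambda$ carries the norm $\Vert\nabla w\Vert_{L^2}+\lambda\Vert w\Vert_{L^2}$; since $\Vert\frac1\lambda\Delta_g f\Vert_{H^{-1}_\lambda}\le\frac1\lambda\Vert\nabla_g f\Vert_{L^2}$, this recovers the correct power $\lambda^{\kappa}$.

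Two further points. In the converse direction, what you ``read off'', namely $\Vert u\Vert_{H^1}\le C\lambda^{\kappa}\Vert P(\lambda)u\Vert_{L^2}$, is not stronger but \emph{weaker} by a factor $\lambda$ than the claimed estimate $\Vert u\Vert_{L^2}\le C\lambda^{\kappa-1}\Vert P(\lambda)u\Vert_{L^2}$; you should instead use the second component: with $(f,g)=(0,P(\lambda)u)$ one has $v=i\lambda u$, so $\lambda\Vert u\Vert_{L^2}=\Vert v\Vert_{L^2}\le C\lambda^{\kappa}\Vert P(\lambda)u\Vert_{L^2}$, which is exactly the desired bound. Finally, on a compact boundaryless $M$ the constants satisfy $\mathcal{A}(c,0)=0$, so $0\in\sigma(\mathcal{A})$ and neither ``$i\xR\subset\rho(\mathcal{A})$'' nor the boundedness of $\mathcal{A}^{-1}$ holds on $H^1\times L^2$ as you assert: one must quotient by constants (the energy seminorm does not see them), and since the scalar estimate is only assumed for $\lambda\ge\lambda_0$, the intermediate frequencies $0<|\lambda|\le\lambda_0$ require the compactness/unique-continuation argument that you only gesture at.
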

 
 \subsection{Proof of Theorem \ref{stabi} assuming Theorem \ref{quasi-mode}}
 As before Theorem \ref{stabi} will follow from the resolvent estimate 
 \begin{equation}\label{res-est}
    \exists  C>0, h_0>0:   \forall h \leq h_0 \quad \Vert \varphi \Vert_{L^2(M)} \leq C h^{-(1+ \kappa)} \Vert L_h \varphi \Vert_{L^2(M)}
\end{equation}
for   every $\varphi \in C^\infty(M).$

We   prove \eqref{res-est} by contradiction. If it is false   one can find sequences $(\varphi_j),  (h_j), (f_j)$ such that
 \begin{equation}\label{non-res-est}
    (-h_j^2 \Delta_g -1 +ih_jb) \varphi_n = f_j  \quad \text{and} \quad \Vert \varphi_j \Vert_{L^2(M)} > \frac{j}{h_j^{1+ \kappa}} \Vert f_j \Vert_{L^2(M)}.
  \end{equation}
Then $\Vert \varphi_j \Vert_{L^2(M)} >0$ and we may therefore assume that $\Vert \varphi_j\Vert_{L^2(M)} =1.$ It follows that 
\begin{equation}\label{fn=o}
  \Vert f_j \Vert_{L^2(M)} = o(h_j^{1+\kappa}), \quad j \to + \infty.
  \end{equation}
 Let $\mu$ be a semiclassical measure for $(\varphi_j).$  By Lemma \ref{energie} we have
 $$  \la \int_M \big\{\vert h_j \nabla_g \varphi_j(p) \vert^2 - \vert \varphi_j(p) \vert^2 \big \} \, dv_g(p) \ra \leq \Vert f_j \Vert_{L^2(M)}.$$
   It follows that  $(\varphi_j)$ is $h_j$-oscillating which implies that  $\mu (S^*(M))=1.$ We therefore shall reach a contradiction if we can show that $\supp \mu = \emptyset$ and \eqref{res-est} will be proved. 
   First of all by elliptic regularity we have
 \begin{equation}\label{ell-reg}
 \supp \mu \subset \{(p, \xi) \in S^*(M) : g_p(\xi, \xi) =1\}.
\end{equation}
On the other hand using Lemma \ref{energie} we have
 \begin{equation}\label{n-r-e-2}
    \int b(p) \vert \varphi_j(p) \vert^2\, dv_g(p) \leq \frac{1}{h_j}Ê\Vert f_j \Vert_{L^2(M)}  
\end{equation}
since $\Vert \varphi_j \Vert_{L^2(M)} =1.$  We deduce from \eqref{non-res-est}, \eqref{n-r-e-2}  and \eqref{fn=o} that 
\begin{equation}\label{est-gn}
  (h_j^2 \Delta_g +1)\varphi_j = G_j, \quad \text{where }\quad  \Vert G_j \Vert_{L^2(M)} = o(h^{1+\frac{\kappa}{2}}_j) \to 0, j \to + \infty.
  \end{equation}
  This shows that the support of $\mu$ is invariant by the geodesic flow. Let $\rho_0 \in S^*(M)$ and $\rho_1= (p_1, \xi_1) \in S^*(M))$ belonging to the geodesic issued from $\rho_0$. Then
  $$\rho_0 \notin \supp \mu \Longleftrightarrow \rho_1 \notin \supp \mu.$$
  But according to our assumption of weak geometric control,     either a neighborhood of $p_1$ belongs to the set $\{b(p) \geq c>0\}$ or $p_1 \in \Sigma^k$ and   $b(p) \geq Cd(p, \Sigma^k)^{2 \kappa}$  near $p_1.$ In the first case  in a neighborhood of $\rho_1$ the essential inf of $b$ is positive and hence by \eqref{n-r-e-2} $\rho_1 \notin \supp \mu.$ In the second case taking a small neighborhood $\omega$ of $p_1$ we write
$$  \int_\omega \vert \varphi_j(p)\vert^2 \, dv_g(p) = \int_{\omega \cap N_{\smash{\alpha h_j^{1/2}}}} \vert \varphi_j(p)\vert^2 \, dv_g(p) + \int_{\omega \cap N^c_{\smash{\alpha h_j^{1/2}}}} \vert \varphi_j(p)\vert^2 \, dv_g(p) = (1) +(2).$$
By Theorem \ref{quasi-mode} and \eqref{est-gn} we have
$$ (1) \leq C \alpha^\kappa(1+ \frac{1}{h_j}\Vert g_j \Vert_{L^2(M)}) \leq C \alpha^\kappa(1+o(h_j^\frac{\kappa}{2})) $$
  Using the assumption $b(p) \geq C d(p, \Sigma^k)$ and \eqref{n-r-e-2}    we get
$$  (2)   \leq \frac{C}{\alpha^{2\kappa} h_j^\sigma} \int_M b(p) \vert \varphi_j(p)\vert^2 \, dv_g(p) \leq \frac{C'Ê\Vert f_j \Vert_{L^2(M)}}{\alpha^{2\kappa} h_j^{1+\kappa}} = \frac{o(1)}{\alpha^{2\kappa}}.$$
 It follows that 
 $$  \int_\omega \vert \varphi_j(p)\vert^2 \, dv_g(p) \leq C \alpha^\kappa  +   \frac{o(1)}{\alpha^{2\kappa}}.$$
Let $\eps >0$. We first fix $\alpha(\eps) >0$ such that $ C \alpha(\eps)^\sigma \leq \mez \eps$ then we take $j_0$ large enough such that for $j \geq j_0, o(1) \leq  \alpha(\eps)^{2\kappa}  \mez \eps.$ It follows that for $j \geq j_0$ we have $\int_\omega \vert \varphi_j\vert^2 \, dv_g \leq \eps.$ This shows that $\lim_{j \to + \infty}  \int_\omega \vert \varphi_j\vert^2 \, dv_g = 0$ which implies that $\rho_1 \notin \supp \mu $ thus $\rho_0 \notin \supp \mu. $ Since $\rho_0$ is arbitrary we deduce that $\supp \mu = \emptyset$ which the desired contradiction.
\section{Concentration estimates (Proof of Theorem \ref{quasi-mode})} 
The rest of the paper will be devoted to the proof of Theorem \ref{quasi-mode}. The case $k=1$ i.e. the case of curves, is easier, so we shall start by   this case before dealing with  the general case.
   \subsection{The case of curves} 
   In this case we follow the strategy in~\cite[Section 2.4]{BuGeTz04-3},~\cite{KoTaZw07} and see the equation satisfied by quasi modes as an evolution equation with respect to a well chosen variable. 
      One can find an open neighborhood $U_p$ of $p$ in $M$, a neighborhood $B_0$ of the origin in $\xR^n$ a diffeomorphism $\theta$ from $U_p $ to $B_0$ such that
\begin{equation*}
\begin{aligned}
&(i)\quad \theta (U_p \cap \Sigma^1) = \{x =(x',x_n) \in (\xR^{n-1}\times \xR) \cap B_0: x'=0\}\\
&(ii) \quad \theta(N) \subset \{x\in B_0: \vert x' \vert \leq \alpha h^\mez\}.
\end{aligned}
 \end{equation*}
 Now $\Sigma^1 $ is covered by a finite number of such open neighborhoods i.e. $\Sigma^1 \subset \cup_{j=1}^{n_0} U_{p_j}.$ We take a partition of unity relative to this covering i.e. $(\chi_j) \in C^\infty(M)$ with $\supp \chi_j \in U_{p_j}$ and $\sum_{j=1}^{n_0} \chi_j = 1$ in a fixed neighborhood of $\Sigma^1.$ 
 Taking $h$ small enough  we can write
 $$ \psi_h = \sum_{j=1}^{n_0} \chi_j \psi_h, \quad  (  h^2 \Delta_g +1) \psi_h =   \sum_{j=1}^{n_0}  (h^2 \Delta_g +1)(\chi_j \psi_h)\quad \text{on } N.$$
Now   for $j = 1, \ldots, n_0$ set
\begin{equation}\label{Fjh}
F_{j,h}= (h^2 \Delta_g +1)(\chi_j \psi_h).
\end{equation}
Then
   $$F_{j,h}= \chi_j g_h -  h^2(\Delta_g \chi_j) \psi_h -2h^2g_p(\nabla_g\psi_h, \nabla_g \chi_j )    =: (1) -(2)-(3). $$
  We have $\Vert(1)\Vert_{L^2(M)} \leq C\Vert g_h \Vert_{L^2(M)} $ and $\Vert(2)\Vert_{L^2(M)} \leq Ch^2\Vert \psi_h\Vert_{L^2(M)}.$ By the Cauchy Schwarz inequality we can write 
$$ h^2g_p(\nabla_g \psi_h, \nabla_g \chi_j ) \leq h^2g_p(\nabla_g \psi_h, \nabla_g \psi_h )^\mez g_p( \nabla_g\chi_j, \nabla_g \chi_j )^\mez$$ 
which implies that  $\vert(3)\vert^2 \leq Ch^4 g_p(\nabla_g \psi_h, \nabla_g \psi_h ).$ 
It follows from Lemma \ref{energie} with $b \equiv 0$ that $\Vert(3)\Vert_{L^2(M)} \leq Ch (\Vert \psi_h\Vert_{L^2(M)} +  \Vert \psi_h \Vert^\mez_{L^2(M)}\Vert g_h \Vert^\mez_{L^2(M)}).$  

Summing up we have proved that for $ j = 1, \ldots, n_0$
\begin{equation}\label{tronc}
  \Vert F_{j,h} \Vert_{L^2(M)} \leq C(h  \Vert \psi_h \Vert_{L^2(M)}   +  \Vert g_h \Vert_{L^2(M)}).
  \end{equation}
Setting $u_{j,h}(x) = (\chi_j \psi_h) \circ \theta_j^{-1}(x) $ we see that we have
\begin{equation}\label{somme}
\Vert \psi_h \Vert_{L^2( N_{\smash{\alpha h^{1/2}}})} \leq \sum_{j=1}^{n_0} \Vert \chi_j \psi_h\Vert_{L^2( N_{\smash{\alpha h^{1/2}}})} \leq C \sum_{j=1}^{n_0} \Vert u_{j,h}\Vert_{L^2(B_{\alpha,h})}
\end{equation}
where $B_{\alpha,h} =\{x=(x',x_n)\in \xR^{n-1}\times \xR: \vert x' \vert \leq \alpha h^\mez, \vert x_n \vert \leq c_0\}$.

 Our aim is to bound $ \Vert u_{j,h}\Vert_{L^2(B_{\alpha,h})}$ for $j = 1, \ldots, n_0.$ Therefore we can fix $j$ and omit it in what follows.  Without loss of generality we can assume that $\supp u_h \subset K$ where $K$ is a fixed compact independent of $h$.
 
 Notice that   Lemma \ref{energie} with $b \equiv 0$ implies that
   \begin{equation}\label{grad}
    \begin{aligned}
 h^2 \Vert \nabla_x u_h \Vert^2_{L^2(\xR^n)} \leq C (\Vert \psi_h \Vert^2_{L^2(M)} +\Vert \psi_h\Vert_{L^2(M)}\Vert f_h \Vert_{L^2(M)})
 \end{aligned}
  \end{equation}
    From \eqref{tronc} we see that
\begin{equation}
    ( h^2 P +1)u_{h} = G_{h}, \text{where}\quad 
           \end{equation}
   where $P = \frac{1}{g(x)^\mez}\sum_{k,l =1}^n \frac{\partial}{\partial x_k}\big( g(x)^\mez g^{kl}(x)\frac{\partial}{\partial x_l}\big)$ is the image of the Laplace Beltrami operator under the diffeomorphism  and 
   \begin{equation}\label{Fh}
  \Vert G_{h}\Vert_{L^2(\xR^n)} \leq  C(h  \Vert \psi_h \Vert_{L^2(M)}   +  \Vert g_h \Vert_{L^2(M)}).
\end{equation}
Now let $\Psi_1 \in C^\infty(\xR^n), \Psi_1(\xi) = \mez$ if $ \vert \xi \vert \leq 1, \Psi_1(\xi) = 0 $ if $\vert \xi \vert \geq 1 $ and $\Psi \in C_0^\infty(\xR^n), \Psi=1$ on the support of $\Psi_1.$ Then $(1-\Psi_1)(1-\Psi) = 1-\psi.$
 Write
\begin{equation}\label{uh}
u_h = (I- \Psi(hD)) u_h + \Psi(hD)u_h =: v_h + w_h.
\end{equation}
We have
$$   (h^2 P +1)v_{h}=(h^2 P +1) (I-\Psi_1(hD))v_{h} =   (I- \Psi_1(hD)) \widetilde{F}_{h} -\big[h^2P, \Psi_1(hD)\big]u_h=: G_{1h}$$
By   \eqref{Fh}, \eqref{grad} and the semi classical symbolic calculus we have
$$ \Vert G_{1h}\Vert_{L^2(\xR^n)} \leq  C(h  \Vert \psi_h \Vert_{L^2(M)}   +  \Vert g_h \Vert_{L^2(M)}).$$
Now on the support of $ 1- \Psi_1(\xi) $, the principal symbol of the semi classical pdo,  $Q= (h^2 P +1)$ does not vanish. By the elliptic regularity we have therefore
\begin{equation}\label{vh}
\sum_{k =0}^2\Vert (h \nabla)^kv_h \Vert_{L^2(\xR^n)} \leq C \Vert G_{1h} \Vert_{L^2(\xR^n)}\leq C(h  \Vert\psi_h \Vert_{L^2(M)}   +  \Vert g_h \Vert_{L^2(M)}).
\end{equation}
It follows that for $\eps>0$ small we have
\begin{equation}\label{vh1}
h^{1+ \eps} \Vert  v_h \Vert_{H^{1+\eps}(\xR^n)} \leq  C(h \Vert \psi_h \Vert_{L^2(M)}   +  \Vert g_h \Vert_{L^2(M)}).
\end{equation}
Now recall that $x=(x',x_n)$ where $x'\in \xR^{n-1}$. Let   $r =1$ if $n=2$, $r=2$ if $n \geq 3. $ Then $H^{1+\eps}(\xR^r) \subset L^\infty(\xR^r).$ Set $x' =(y,z)\in \xR^r \times \xR^{n-1-r}.$ We can write
\begin{align*}
 \Vert v_h \Vert_{L^2(B)} &\leq \Big( (\alpha h^\mez)^r \int \sup_{y\in \xR^r}\vert v_h(y,z,x_n)\vert^2 dz dx_n\Big)^\mez \leq (\alpha h^\mez)^\frac{r}{2} \Big(\int  \Vert v_h(\cdot,z,x_n)\Vert_{H^{1+ \eps}(\xR^r)}^2 dz dx_n\Big)^\mez\\
  &\leq C(\alpha h^\mez)^\frac{r}{2} \Vert v_h \Vert_{H^{1+ \eps}(\xR^n)} \leq  C\alpha ^\frac{r}{2} h^{\frac{r}{4} - \eps}\frac{1}{h} (h  \Vert\psi_h \Vert_{L^2(M)}   +  \Vert g_h \Vert_{L^2(M)}).
  \end{align*}
and since $\frac{r}{4}-\eps >0$ we obtain eventually
\begin{equation}\label{vh2}
\Vert v_h \Vert_{L^2(B)} \leq C\alpha ^\sigma( \Vert\psi_h \Vert_{L^2(M)}   +  \frac{1}{h}\Vert g_h \Vert_{L^2(M)})
\end{equation}
where $\sigma = \mez $ if $n=2$, $\sigma= 1$ if $n\geq 3.$

 Now let us consider $w_h.$ First of all we have
\begin{equation}\label{psiuh}
  (h^2 P +1) w_h = \Psi(hD)\widetilde{F}_{h} + \big[h^2P, \Psi(hD) \big] u_h=: G_{2h}
  \end{equation}
with, as above
\begin{equation}\label{Gh}
\Vert G_{2h} \Vert_{L^2(\xR^n)} \leq C(h  \Vert \psi_h \Vert_{L^2(M)}   +  \Vert g_h \Vert_{L^2(M)}).
\end{equation}
We notice that the semi classical principal symbol $q$ of the operator $Q =:     h^2 P + 1$ satisfies the following property
\begin{equation}\label{type-princ}
   \text{on the set }  \{(x,\xi)\in T^*(\xR^n): q(x, \xi) =0\}  \text{ we have } \frac{\partial q}{ \partial \xi} \neq 0. 
\end{equation}
Since $\mathcal{K}:=K \times \supp \Psi$ is a compact subset ot $T^*(\xR^n)$  we can   find a finite number of  subsets ot $T^*(\xR^n), \mathcal{V}_1, \ldots \mathcal{V}_N$ such that $\mathcal{K} \subset \cup_{j=1}^N \mathcal{V}$ and in which
\begin{equation}\label{vois}
\begin{aligned}
&(i) \quad \text{either} \quad \vert q(x,\xi)\vert \geq c_0>0\\
&(ii) \quad \text{or} \quad q(x,\xi) = e(x,\xi)(\xi_l + a(x, \xi')),  \quad a \text{ real}, \quad  e(x,\xi) \neq 0.
\end{aligned}
 \end{equation}
Then we can find  $(\zeta_j)_{j=1,\ldots,N}$ such that
 $$ \zeta_j \in C_0^\infty(\mathcal{V}_j), \quad \text{and} \quad \sum_{j=1}^N \zeta_j =1\quad \text{in a neighborhood of } \mathcal{K}.$$
Therefore we can write
\begin{equation}\label{dec-wh}
 \Psi(hD) u_h = w_h  = \sum_{j=1}^N \zeta_j(x, hD)w_h.
 \end{equation}
It is sufficient to bound each term so we shall skip the index $j.$
 
 {\bf case\, 1}. In $\mathcal{V} $ we have $\vert q(x,\xi)\vert \geq c_0>0.$
 
 In that case the symbol $ a= \frac{\zeta}{q}$ belongs to $S^0(\xR^n \times \xR^n).$ By the semi classical symbolic calculus and \eqref{psiuh} we can write
 \begin{align*}
 \zeta(x,hD) w_ h&= \zeta (x,hD)\Psi(hD)u_h = a(x,hD)Q(x,hD)\Psi(hD)u_h + R_h u_h \\
 &= a(x,hD)G_{2h} + R_hu_h 
 \end{align*}
where
  $$\Vert R_h u_h \Vert_{L^2(\xR^n)} \leq Ch \Vert   u_h \Vert_{L^2(\xR^n)}. $$
    It follows from \eqref{Gh} that
\begin{equation}
\sum_{k=0}^2\Vert (h \nabla)^k \zeta (x,hD) w_ h \Vert_{L^2(\xR^n)} \leq C(h  \Vert \psi_h \Vert_{L^2(M)}   +  \Vert g_h \Vert_{L^2(M)}) 
\end{equation}
  so we see that $ \zeta (x,hD) w_ h$ satisfies the same estimate \eqref{vh} as $v_h$. Therefore the same argument as before leads to
  \begin{equation}\label{zone-ell}
\Vert \zeta (x,hD) w_ h\Vert_{L^2(B)} \leq C\alpha ^\sigma( \Vert\psi_h \Vert_{L^2(M)}   +  \frac{1}{h}\Vert g_h \Vert_{L^2(M)}),
\end{equation}
where $\sigma = \mez $ if $n=2$, $\sigma= 1$ if $n\geq 3.$

{\bf case\, 2}. In $\mathcal{V} $ we have $q(x,\xi) = e(x,\xi)(\xi_l + a(x,\eta)),$   $a$   real, $\vert e(x,\xi)\vert \geq c_0>0.$
$$ l\in\{1,\ldots,  n-1\}, \eta =(\xi_1, \ldots, \xi_{l-1},\xi_{l+1},\ldots, \xi_n), \quad e\in S^0, \quad \vert e(x,\xi)\vert \geq c_0>0.$$
Let us set $x_l = t, x = (x_1, \ldots, x_{l-1},x_{l+1},\ldots, x_n).$  Recall (see \eqref{somme}) that $B_{\alpha,h}\subset \{(t,x):\vert t \vert \leq \alpha h^\mez\}.$

Using the symbolic calculus and \eqref{Gh}  we see easily that  
$$   \big( ih \frac{\partial }{\partial t} + a(t,x, hD_x)\big)  \zeta (x,hD) w_ h   = G_{3h},$$
where 
\begin{equation}\label{G3h}
 \Vert G_{3h} \Vert_{L^2(\xR^n)} \leq C(h  \Vert \psi_h \Vert_{L^2(M)}   +  \Vert g_h \Vert_{L^2(M)}).
\end{equation}
Since the symbol $a$ is real, computing $\frac{d}{dt} \Vert w(t, \cdot)\Vert^2_{L^2(\xR^{n-1})}$ we see easily that
$$\Vert  \zeta (x,hD) w_ h(t, \cdot)\Vert_{L^2(\xR^{n-1})} \leq C \int_{t_0}^t \Vert  \zeta (x,hD) w_ h(s, \cdot)\Vert_{L^2(\xR^{n-1})}\, ds + \frac{1}{h} \int_{t_0}^t \Vert G_{3h}(s, \cdot)\Vert_{L^2(\xR^{n-1})}\, ds.$$
Now since $\vert t \vert \leq \alpha h^\mez, \vert t_0 \vert \leq \alpha h^\mez$ using the Cauchy Schwarz inequality,  \eqref{G3h}  and the Gronwall inequality we obtain
 $$ \Vert  \zeta (x,hD) w_ h(t, \cdot)\Vert_{L^2(\xR^{n-1})} \leq C\alpha^\mez h^{\frac{1}{4}} (\Vert \psi_h \Vert _{L^2(M)}   + \frac{1}{h} \Vert g_h \Vert_{L^2(M)}).$$
 It follows that
\begin{equation}\label{zone-energ}
\Vert  \zeta (x,hD) w_ h \Vert_{L^2(B_{\alpha,h})} \leq C\alpha h^{\frac{1}{2}} (\Vert \psi_h \Vert _{L^2(M)} + \frac{1}{h} \Vert g_h \Vert_{L^2(M)}).
 \end{equation}

{\bf case\, 3}. In $\mathcal{V} $ we have $q(x,\xi) = e(x,\xi)(\xi_n + a(x,\xi')),$   $a$    real, $\vert e(x,\xi)\vert \geq c_0>0.$

Since $B_{\alpha,h} = \{x=(x',x_n)\in \xR^{n-1}\times \xR: \vert x' \vert \leq \alpha h^\mez, \vert x_n \vert \leq c_0\}$ we cannot use the same argument as in case 2. Instead we shall use   Strichartz estimates proved in \cite[Section 2.4]{BuGeTz04-3} and  \cite{KoTaZw07} (see also~\cite{Zw12}). First of all, as before we see that 
$$   \big( ih \frac{\partial }{\partial t} + a(x, hD_{x'})\big)  \zeta (x,hD) w_ h   = G_{4h} $$
where $t = x_n$ and $G_{4h}$ satisfies \eqref{G3h}.

Assume first $n \geq 4.$ It is proved in the above works  that with   $I = \{\vert t \vert \leq c_0\}$ one has
  \begin{equation}\label{stri}
\Vert \zeta (x,hD) w_ h \Vert_{L_{t}^2 ( I, L_{x'}^r(\xR^{n-1})} \leq Ch^{-\frac{1}{2}} \frac{1}{h} \Vert G_{4h}\Vert_{L_t^1(I,  L_{x'}^2(\xR^{n-1}))}, \quad r = \frac{2(n-1)}{n-3} .
\end{equation}
 Now set $B' = \{x'\in \xR^{n-1}: \vert x' \vert \leq \alpha h^\mez\}. $
Using the H\" older inequality we obtain
$$\Vert \zeta (x,hD) w_ h(t, \cdot) \Vert_{L^2(B')} \leq  C \alpha h^\mez  \Vert \zeta (x,hD) w_ h(t, \cdot) \Vert_{L^r(\xR^{n-1})} $$
which implies, using \eqref{stri} and \eqref{G3h} that
 \begin{equation}\label{zone-stri1}
\Vert \zeta (x,hD) w_ h(t, \cdot) \Vert_{L^2(B_{\alpha,h})} \leq  C\alpha  (\Vert \psi_h \Vert _{L^2(M)}   + \frac{1}{h} \Vert g_h \Vert_{L^2(M)}).
\end{equation}
When $n =3$ the Strichartz estimate \eqref{stri} does not hold but we have the weaker ones, with $\frac 1 q + \frac 2 r =1$, $r< + \infty$
  \begin{equation}\label{stri2}
\Vert \zeta (x,hD) w_ h \Vert_{L_{t}^{q} ( L_{x'}^r(\xR^2))} \leq C_rh^{-(\frac{1}{2}- \frac 1 r)}  \frac{1}{h} \Vert G_{4h}\Vert_{L_t^1( L_{x'}^2(\xR^{2}))} 
\end{equation}
where (see~\cite{LiLo01}) 
$$ C_r \leq C r^{1/2}.$$
Then the H\" older inequality gives
$$\Vert \zeta (x,hD) w_ h(t, \cdot) \Vert_{L^2(B')} \leq  C_r (\alpha h^\mez)^{2 (\frac 1 2 - \frac 1 r)}  \Vert \zeta (x,hD) w_ h(t, \cdot)\Vert_{L^r} $$
and therefore
\begin{equation}\label{zone-stri2}
\Vert \zeta (x,hD) w_ h(t, \cdot) \Vert_{L^2(B_{\alpha,h})} \leq  C r^{1/2}\alpha^{\frac 1 2 - \frac 1 r}  (\Vert \psi_h \Vert _{L^2(M)}   + \frac{1}{h} \Vert g_h \Vert_{L^2(M)}).
\end{equation}
Optimizing with respect to $r<+ \infty $ leads to the choice $r = 4 \log ( \alpha^{-1})$, which gives a $\sqrt{\log( \alpha^{-1})} $ loss in the final estimate.
In the case $n =2$ we have instead the estimate
 \begin{equation*} 
\Vert \zeta (x,hD) w_ h \Vert_{L_{t}^4 ( I, L_{x'}^\infty(\xR))} \leq Ch^{-\frac{1}{4}} \frac{1}{h} \Vert G_{4h}\Vert_{L_t^1(I,  L_{x'}^2(\xR)) }.
\end{equation*}
which gives eventually
 \begin{equation}\label{zone-stri3}
\Vert \zeta (x,hD) w_ h(t, \cdot) \Vert_{L^2(B_{\alpha,h})} \leq  C\alpha^\mez  (\Vert \psi_h \Vert _{L^2(M)}   + \frac{1}{h} \Vert g_h \Vert_{L^2(M)}).
\end{equation}
 Then the conclusion in Proposition \ref{quasi-mode} follows from \eqref{somme}, \eqref{uh}, \eqref{vh2}, \eqref{zone-ell}, \eqref{zone-energ}, \eqref{zone-stri1}, \eqref{zone-stri2}, \eqref{zone-stri3}.
 \subsection{The general case: submanifolds of dimension $k$, $1 \leq k \leq n-1$}

 The   Laplace-Beltrami  operator $-\Delta_g$ with domain $D = \{u \in L^2(M): \Delta_g u \in L^2(M)\}$ has a discrete spectrum which can be written
   $$0 = \lambda_0^2< \lambda_1^2<\cdots < \lambda_j ^2\cdots \to + \infty$$
  where $  \lambda_j >0 , j \geq 1$ and     $-\Delta_g \varphi =  \lambda_j ^2 \varphi.$ 
   
   Moreover we can write $L^2(M) = \oplus_{j=0}^{+ \infty} \mathcal{H}_j,$ where $\mathcal{H}_j$ is the subspace of eigenvectors associated to the eigenvalue $\lambda_j ^2$ and  $\mathcal{H}_j \perp \mathcal{H}_k$ if $j \neq k.$

For $\lambda \geq 0$   we define  the spectral projector  $\Pi_\lambda:L^2(M) \to L^2(M)$ by
 \begin{equation}\label{def-proj}
    L^2(M) \ni f = \sum_{j\in \xN} \varphi_j,   \mapsto     \Pi_\lambda f = \sum_{j \in \Lambda_\lambda} \varphi_j, \quad \Lambda_\lambda= \{j \in \xN: \lambda_j \in [\lambda, \lambda+1)\}.
    \end{equation}
 Then   $\Pi_\lambda$ is self adjoint and $\Pi^2_\lambda =\Pi_\lambda$.

 Theorem \ref{quasi-mode} will be a consequence of the following one. Recall  $ N_{\smash{\alpha h^{1/2}}}$ has been defined in   \eqref{N}.

 \begin{prop}\label{proj-spec}
 There exist $C>0, h_0>0$ such that for every $h \leq h_0$ and every $\alpha \in (0,1)$
 \begin{equation}\label{est-proj}
 \Vert \Pi_\lambda u \Vert_{L^2( N_{\smash{\alpha h^{1/2}}})} \leq C \alpha^\sigma\Vert u \Vert_{L^2(M)}, \quad \lambda = \frac{1}{h},
\end{equation}
for every $u \in L^2(M),$  Here  $\sigma = 1$ if $k \leq n-3$, $\sigma = 1^- $ if $k = n-2, \sigma = \mez $ if $k = n-1.$
\end{prop}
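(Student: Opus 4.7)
My plan is to proceed by a $TT^*$ reduction followed by a semiclassical analysis of the spectral projector in Fermi coordinates near $\Sigma^k$. Since $\Pi_\lambda$ is self adjoint with $\Pi_\lambda^2 = \Pi_\lambda$, setting $T = \mathbf{1}_N \Pi_\lambda$ where $\mathbf{1}_N$ denotes the characteristic function of $N = N_{\alpha h^{1/2}}$, we have $T T^* = \mathbf{1}_N \Pi_\lambda \mathbf{1}_N$ and therefore
$$\Vert \Pi_\lambda u\Vert_{L^2(N)}^2 = \Vert T u \Vert_{L^2}^2 = \langle T T^* u, u \rangle \leq \Vert \mathbf{1}_N \Pi_\lambda \mathbf{1}_N \Vert_{L^2 \to L^2}\, \Vert u \Vert_{L^2}^2.$$
It thus suffices to establish $\Vert \mathbf{1}_N \Pi_\lambda \mathbf{1}_N \Vert_{L^2 \to L^2} \leq C \alpha^{2\sigma}$. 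Taking a finite partition of unity subordinate to a cover of $\Sigma^k$ by coordinate charts in which Fermi coordinates $(y, z') \in \xR^k \times \xR^{n-k}$ are available, with $\Sigma^k \cap U = \{z' = 0\}$ and $N \cap U \subset \{|z'| \leq C \alpha h^{1/2}\}$, this further reduces to a localized estimate for each piece $\kappa_j \mathbf{1}_N \Pi_\lambda \mathbf{1}_N \kappa_l$.

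For the localized estimate I would use the representation of $\Pi_\lambda$ from~\cite{BuGeTz05}. Writing $\Pi_\lambda = \tilde\chi(\sqrt{-\Delta_g} - \lambda)$ for a cutoff $\tilde\chi \in C_c^\infty$ equal to $1$ on $[0, 1]$ and applying Fourier inversion yields, modulo an $O(\lambda^{-\infty})$ error,
$$\Pi_\lambda = \frac{1}{2 \pi} \int \widehat{\tilde\chi}(\tau)\, e^{i \tau \lambda}\, e^{-i\tau \sqrt{-\Delta_g}}\, d\tau.$$
Combined with the Hadamard parametrix for the half wave propagator on a sufficiently short time interval, this represents $\Pi_\lambda$ modulo smoothing as a finite sum of semiclassical Fourier integral operators whose canonical relations are local pieces of the geodesic flow.

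The problem then reduces to estimating such semiclassical FIOs between $L^2$-functions supported in the slab $\{|z'| \leq C \alpha h^{1/2}\}$, and I would apply the same dichotomy as in the curves subsection. A microlocal cutoff in the fiber variable splits the FIO into an elliptic piece (away from the characteristic variety $\{g^*(x, \xi) = 1\}$) and pieces microlocalized near it. On the elliptic piece, elliptic regularity provides a gain of $h$ in $H^{1 + \varepsilon}$-norms, and the Sobolev embedding $H^{1+\varepsilon}(\xR^r) \hookrightarrow L^\infty(\xR^r)$ applied in the normal directions to $\Sigma^k$, with $r = 1$ if $n - k = 1$ and $r = 2$ if $n - k \geq 2$, converts the factor $(\alpha h^{1/2})^{r/2}$ arising from the width of the slab into the required $\alpha^\sigma$ factor. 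On each piece microlocalized near the characteristic variety one brings the symbol into the normal form $q = e(x, \xi)(\xi_l + a(x, \xi'))$, reducing matters to a one-dimensional semiclassical Schr\"odinger equation in $x_l$, where the semiclassical Strichartz estimates recalled in the curves subsection yield the required $\alpha^\sigma$ gain whenever the chosen propagation variable $x_l$ is transverse to $\Sigma^k$.

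The main obstacle is the hyperbolic situation in which the propagation variable selected by the normal form is tangent to $\Sigma^k$: then the slab offers no short time for transport and the $\alpha$ factor must come exclusively from the restriction in the $(n-k)$ normal variables. Here one has to exploit the precise description of the FIO phase from~\cite{BuGeTz05} together with an $L^2$-restriction estimate for the oscillatory kernel onto the slab, analogous to the Stein--Tomas-type restriction estimates used there for submanifolds. The codimension $n - k = 2$ case hits the endpoint of the two-dimensional Strichartz estimate and produces the $\sqrt{\log \alpha^{-1}}$ loss responsible for $\sigma = 1^-$, while the hypersurface case $n - k = 1$ unavoidably degrades to $\sigma = 1/2$ since one integrates in only one normal direction.
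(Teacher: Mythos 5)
Your overall skeleton (reduce to a smooth spectral multiplier, use the description of $\chi(\sqrt{-\Delta_g}-\lambda)$ from~\cite{BuGeTz05}, then a $TT^*$ argument localized to the slab) points in the right direction, but the proposal has two problems, one minor and one fatal. The minor one: the identity $\Pi_\lambda=\tilde\chi(\sqrt{-\Delta_g}-\lambda)$ modulo $O(\lambda^{-\infty})$ with $\tilde\chi\in C_c^\infty$ and a short-time Hadamard parametrix is not correct as stated. With $\tilde\chi$ compactly supported, $\widehat{\tilde\chi}$ is not compactly supported, and the long-time contributions of $e^{-i\tau\sqrt{-\Delta_g}}$ are not negligible (this is exactly why Sogge-type bounds cannot be improved by lengthening the parametrix time). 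The correct reduction, as in the paper, goes the other way: one proves the bound for a multiplier $\chi$ with $\chi(0)\neq 0$ (so that $\chi\geq c>0$ on a small interval), writes the projector on a window of length $\delta$ as $\chi(\sqrt{-\Delta_g}-\mu)\circ R$ with $\Vert R\Vert_{L^2\to L^2}\leq 1/c$, and sums $O(1/\delta)$ windows; equivalently one can use $\tilde\chi(\sqrt{-\Delta_g}-\lambda)\Pi_\lambda=\Pi_\lambda$. This is fixable, but as written your reduction is unjustified.

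The fatal gap is in the core estimate. Your plan is to run the same dichotomy as in the curve case (elliptic region, then normal form $\xi_l+a(x,\xi')$ plus semiclassical Strichartz), but this cannot close for $k\geq 2$ precisely in the case you yourself flag as the ``main obstacle'', namely when the propagation variable is tangent to $\Sigma^k$. Quantitatively: with an admissible pair $\frac2q=(n-1)(\mez-\frac1r)$ the Strichartz bound costs $h^{-1/q}$, while H\"older on the slab only gains $(\alpha h^\mez)^{(n-k)(\mez-\frac1r)}$ because only the $n-k$ normal variables are constrained; the net power of $h$ is $h^{-\frac{k-1}{q(n-1)}}$, which balances only for $k=1$. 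So for $k\geq 2$ this route loses a positive power of $h^{-1}$ and gives nothing, and your fallback (``a Stein--Tomas-type restriction estimate for the oscillatory kernel onto the slab'') is exactly the part of the proof you have not supplied. In the paper this is where all the work lies: one forms $\mathcal{S}_\lambda=\mathcal{T}_\lambda\mathcal{T}_\lambda^*$ for the oscillatory integral $\mathcal{T}_\lambda$ with phase the geodesic distance, splits the kernel at $|x-x'|\sim\lambda^{-1}$ (Schur in two regimes, according to $\alpha\lessgtr h^\mez$), decomposes the remaining region dyadically in $|x-x'|\sim 2^{-j}$ using Lemma~\ref{BGT}, rescales and uses almost-orthogonality, and then applies two oscillatory-integral estimates of Stein type (Theorems~\ref{stein1} and~\ref{stein2}, with mixed-Hessian ranks $n-1$ and $k-1$, the rank count resting on Lemma~\ref{hessien} for the distance phase); the dyadic sum is finally split at $2^j\sim\alpha^{-2}$, and it is the count of dyadic scales when $\frac{n-k}{2}-1=0$ (i.e. $k=n-2$) that produces the logarithmic loss --- not a two-dimensional Strichartz endpoint, as you suggest. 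Without these kernel estimates and the dyadic/rescaling machinery (or a genuine substitute), the proposal does not prove the proposition for any $k\geq 2$, nor does it explain the exponents $\sigma=1,\,1^-,\,\mez$.
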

Here, as before, $1^-$ means that we have an estimate by $C \alpha \vert \log(\alpha) \vert.$
 \subsubsection{Proof of Theorem \ref{quasi-mode} assuming Proposition \ref{proj-spec}.}
  If $ \psi = \sum_{j \geq 0} \varphi_j$ we have $  g = (h^2\Delta_g+1) \psi = \sum_{j \geq 0}(h^2 \Delta_g +1) \varphi_j.$ Therefore  by orthogonality
 \begin{equation}\label{norme-g}
 \Vert g \Vert^2_{L^2(M)} =  \sum_{j \geq 0}\vert 1- h^2 \lambda_j^2 \vert^2 \Vert \varphi_j \Vert^2_{L^2(M)}.  
 \end{equation}
   Let $\eps_0$ be a fixed number in $]0,1[.$ With $N =[\eps_0 \lambda]$ we write
 $$ \psi = \sum_{k = -N}^N \Pi_{\lambda +k} \psi + R_N.$$
 Recall that $  \Pi_{\lambda +k} \psi= \sum_{j \in E_k} \varphi_j$, where $E_k = \{j \geq 0: \lambda_j \in [\lambda +k, \lambda +k +1[.$

 Assume   $\vert k \vert \geq 2.$ Since $\lambda +k \leq \lambda_j < \lambda +k +1 $  we have $ \vert \lambda_j - \lambda\vert \geq \mez \vert k \vert$ which implies that $  \vert \lambda^2_j - \lambda^2 \vert \geq \mez \vert k \vert \lambda.$  By orthogonality we have 
 \begin{align*}
  \Vert \Pi_{\lambda +k} \psi \Vert^2_{L^2(M)} &=  \sum_{j \in E_k} \Vert \varphi _j\Vert^2_{L^2(M)} =  \sum_{j\in E_k} \frac{1}{\vert \lambda^2_j - \lambda^2 \vert^2} \vert \lambda^2_j - \lambda^2 \vert^2\Vert \varphi_j\Vert^2_{L^2(M)}\\
  & \leq \frac{4}{\vert k \vert^2 \lambda^2} \sum_{j\in E_k}   \vert \lambda^2_j - \lambda^2 \vert^2\Vert \varphi_j\Vert^2_{L^2(M)} \leq \frac{4 \lambda^2}{\vert k \vert^2 } \sum_{j\in E_k}  \vert h^2\lambda^2_j - 1 \vert^2\Vert \varphi_j\Vert^2_{L^2(M)}.
\end{align*}
Since  $ \Pi_{\lambda +k}^2 =  \Pi_{\lambda +k}$, using Proposition \ref{proj-spec} and the above estimate we obtain 
\begin{align*}
 \Vert  \sum_{2 \leq \vert k \vert \leq N} \Pi_{\lambda +k} \psi  \Vert_{L^2( N_{\smash{\alpha h^{1/2}}})}& \leq  \sum_{2 \leq \vert k \vert \leq N}  \Vert  \Pi_{\lambda +k} \psi \Vert_{L^2( N_{\smash{\alpha h^{1/2}}})} \leq C \alpha^\sigma \sum_{2 \leq \vert k \vert \leq N}  \Vert  \Pi_{\lambda +k} \psi \Vert_{L^2(M)}\\
 & \leq 2C \alpha^\sigma\lambda \sum_{2 \leq \vert k \vert \leq N}\frac{ 1 }{\vert k \vert} \Big(\sum_{j\in E_k}  \vert h^2\lambda^2_j - 1 \vert^2\Vert \varphi_j\Vert^2_{L^2(M)}\Big)^\mez.
 \end{align*}
 Using Cauchy-Schwarz inequality, \eqref{norme-g} and the fact that the $E_k$ are  pairwise disjoints we obtain eventually
 \begin{equation}\label{est-somme}
 \Vert  \sum_{2 \leq \vert k \vert \leq N} \Pi_{\lambda +k} \psi  \Vert_{L^2( N_{\smash{\alpha h^{1/2}}})} \leq C \alpha^\sigma\frac{1}{h} \Vert g \Vert_{L^2(M)}.
\end{equation}
Now a direct application of Proposition \ref{proj-spec} shows that
\begin{equation}\label{k<2}
 \Vert  \sum_{ \vert k \vert \leq 1} \Pi_{\lambda +k} \psi  \Vert_{L^2( N_{\smash{\alpha h^{1/2}}})} \leq C \alpha^\sigma  \Vert \psi \Vert_{L^2(M)}.
 \end{equation}
Eventually let us consider the remainder $R_N$. We have 
   $$R_N = \sum_{j\in A} \varphi_j + \sum_{j\in B} \varphi_j, \quad A = \{j: \lambda_j \leq \lambda -N\}, \quad B = \{j: \lambda_j \geq \lambda +N +1\}.$$
 The two sums are estimated by the same way since in both cases we have $\vert \lambda_j - \lambda \vert \geq c \lambda$ thus $\vert \lambda^2_j - \lambda^2 \vert \geq c \lambda^2$. Then by orthogonality we write
 \begin{align*}
  \Vert \sum_{j\in A} \varphi_j \Vert^2_{L^2(M)} &=  \sum_{j\in A} \Vert \varphi_j\Vert^2_{L^2(M)} =  \sum_{j\in A} \frac{1}{\vert \lambda^2_j - \lambda^2 \vert^2} \vert \lambda^2_j - \lambda^2 \vert^2\Vert \varphi_j\Vert^2_{L^2(M)} \\
  & \leq \frac{C}{\lambda^4} \sum_{j\in A}   \vert \lambda^2_j - \lambda^2 \vert^2\Vert \varphi_j\Vert^2_{L^2(M)} \leq\sum_{j\in\xN}   \vert h^2\lambda^2_j - 1\vert^2\Vert \varphi_j\Vert^2_{L^2(M)} = \Vert g \Vert^2_{L^2(M)}.
   \end{align*}
   It follows that $ \Vert R_N\Vert_{L^2(M)} \leq C \Vert g \Vert_{L^2(M)}.$ Now $(h^2\Delta_g +1) R_N =   \sum_{j \in A \cup B} (1- h^2 \lambda_j^2) \varphi_j = :g_N$ and $\Vert g_N \Vert_{L^2(M)} \leq \Vert g  \Vert_{L^2(M)}$. So using Lemma \ref{A1} we obtain
   \begin{equation}\label{est-RN}
   \Vert R_N\Vert_{L^2( N_{\smash{\alpha h^{1/2}}})} \leq C\frac{\alpha^\sigma}{h} \Vert g \Vert_{L^2(M)}
\end{equation}
  where $\sigma = \mez$ if $k = n-1$, $\sigma = 1$ if $1 \leq k \leq n-2.$.
  
Then Theorem \ref{quasi-mode} follows from \eqref{est-somme}, \eqref{k<2} and \eqref{est-RN}.
 
\subsubsection{Proof of Proposition \ref{proj-spec}}
This proposition will be a consequence  of the following one.
\begin{prop}\label{proj-spec2}
Let $\chi \in C^\infty(\xR)$ be such that $\chi(0) \neq 0.$   There exist $C>0, h_0>0$ such that for every $h \leq h_0$,every $\alpha \in (0,1),$ and every $u \in L^2(M)$ we have
 \begin{equation}\label{est-projbis}
 \Vert \chi( \sqrt{- \Delta_g} - \lambda) u \Vert_{L^2( N_{\smash{\alpha h^{1/2}}})} \leq C \alpha^\sigma \Vert u \Vert_{L^2(M)}, \quad \lambda = \frac{1}{h} 
\end{equation}
where $ \chi( \sqrt{- \Delta_g} - \lambda) u = \sum_{j\in \xN}  \chi( \lambda_j - \lambda) \varphi_j $ if $u =\sum_{j\in \xN}  \varphi_j.$
  \end{prop}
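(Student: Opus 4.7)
The plan is to pass to the half-wave representation. Fix once and for all a Schwartz function $\chi$ with $\chi(0)\neq 0$ whose Fourier transform $\hat\chi$ is compactly supported in $(-T_0,T_0)$ with $T_0$ smaller than the injectivity radius of $(M,g)$; it is enough to prove \eqref{est-projbis} for this particular $\chi$, the general case with $\chi_1(0)\neq 0$ following from the trivial $L^2$-boundedness of $\chi_1(\sqrt{-\Delta_g}-\lambda)$ after factoring out the smooth ratio $\chi_1/\chi$. Writing $\chi(\sqrt{-\Delta_g}-\lambda)=\tfrac{1}{2\pi}\int\hat\chi(t)e^{-it\lambda}e^{it\sqrt{-\Delta_g}}\,dt$ and invoking finite speed of propagation, the Schwartz kernel is a purely local object to which the Hadamard parametrix applies. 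A finite partition of unity covering a fixed neighborhood of $\Sigma^k$ reduces the problem to a single Fermi chart $(y,z)\in\xR^k\times\xR^{n-k}$ in which $\Sigma^k=\{z=0\}$ and $N_{\smash{\alpha h^{1/2}}}$ corresponds to the tube $\mathcal{T}_\alpha=\{|z|<C\alpha h^{1/2}\}$.

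I would then apply a $TT^*$ argument: with $T=\mathbf{1}_{\mathcal{T}_\alpha}\,\chi(\sqrt{-\Delta_g}-\lambda)$, one has $\|T\|^2=\|TT^*\|$, whose Schwartz kernel is the restriction to $\mathcal{T}_\alpha\times\mathcal{T}_\alpha$ of the kernel $K_1(x,y)$ of $|\chi|^2(\sqrt{-\Delta_g}-\lambda)$. By Schur's test it suffices to show $\sup_{x\in\mathcal{T}_\alpha}\int_{\mathcal{T}_\alpha}|K_1(x,y)|\,dy\lesssim \alpha^{2\sigma}$. The kernel $K_1$ admits the precise oscillatory integral description from~\cite{BuGeTz05}: up to a Schwartz remainder, it is a leading-order term of the shape $\lambda^{(n-1)/2}e^{i\lambda d(x,y)}a(x,y;\lambda)d(x,y)^{-(n-1)/2}$ on the set where $d(x,y)\sim T_0$, plus a diagonal contribution treated separately. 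In the Fermi splitting $y=(y',z')$, I would analyze the inner integral by stationary phase in the frequency dual to $\omega$, separating it into the small codimension-$(n-k)$ set of covectors nearly tangent to $\Sigma^k$ and the generic transverse set, the former contributing only through its measure and the latter through genuine non-degeneracy in the normal $z'$ direction.

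The $\alpha$-gain is extracted from the normal $z'$-integration. In the transverse regime, the phase has non-degenerate second derivative in $z'$; stationary phase in the $n-k$ normal variables produces a Gaussian profile of width $h^{1/2}$ around a critical $z'$, which, integrated against the characteristic function of $\{|z'|<C\alpha h^{1/2}\}$, yields a factor $\alpha^{n-k}$ whenever this integration is absolutely convergent. This closes the Schur bound to $\alpha^{2}$ for $n-k\ge 3$, giving $\sigma=1$, and only to $\alpha$ for $n-k=1$, giving $\sigma=\mez$, in perfect analogy with the simpler curve case already handled in Section~3.1. The main obstacle is the borderline codimension $n-k=2$: exactly as in case~3 with $n=3$ of the curve argument, the absolute integration in the normal direction becomes logarithmically divergent, and the remedy is the same Strichartz-style interpolation with an $L^{r}_{x'}$ bound and the optimal choice $r=4\log\alpha^{-1}$, producing a $\sqrt{|\log\alpha|}$ loss in $\|T\|$ and thus $\sigma=1^-$ after squaring through $TT^*$.
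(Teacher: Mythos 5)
Your reduction via the half-wave representation/Hadamard parametrix and the $TT^*$ step coincide with the paper's starting point (the paper quotes the description of $\chi_\lambda$ from \cite{BuGeTz05} instead of re-deriving it), but the pivotal step ``by Schur's test it suffices to show $\sup_{x\in\mathcal{T}_\alpha}\int_{\mathcal{T}_\alpha}|K_1(x,y)|\,dy\lesssim\alpha^{2\sigma}$'' is a genuine gap: that bound is false as soon as $n\geq 3$. Away from the diagonal the kernel of $|\chi|^2(\sqrt{-\Delta_g}-\lambda)$ has modulus genuinely comparable to $\lambda^{n-1}(\lambda d(x,y))^{-(n-1)/2}$ on a set of proportional measure (the leading coefficients $a_0^{\pm}$ in Lemma \ref{BGT} do not vanish), and once you take absolute values the $k$ tangential directions of the tube, of unit length, are not damped at all: in codimension one, for instance, $\int_{\mathcal{T}_\alpha}|K_1(x,y)|\,dy\approx \alpha h^{1/2}\cdot\lambda^{(n-1)/2}=\alpha\,\lambda^{(n-2)/2}$, which diverges as $h\to 0$ instead of being $O(\alpha)$. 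The $\alpha^{2\sigma}$ gain is an operator-norm phenomenon coming from the oscillation $e^{i\lambda\widetilde{\psi}(x,x')}$ \emph{along} the tube together with the non-degeneracy of the mixed Hessian of the geodesic distance (Lemma \ref{hessien}); your stationary-phase discussion in the normal variables $z'$ is a pointwise statement about $|K_1|$ and, inside a Schur test with absolute values, cannot recover the tangential cancellation, so the divergent tangential integration is left untouched.

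This is exactly why the paper, after the same $TT^*$ reduction, does not run Schur on the full kernel: it isolates the region $|x-x'|\leq 1/\lambda$ (where Schur with absolute values is harmless), decomposes the rest dyadically in $|x-x'|\sim 2^{-j}$, rescales each piece to unit scale with an almost-orthogonality argument, and applies two Stein-type oscillatory integral \emph{operator} bounds (Theorems \ref{stein1} and \ref{stein2}, with ranks $n-1$ and $k-1$ of the mixed Hessian), yielding the two scale-dependent estimates \eqref{Ajpq2} and \eqref{Ajpq3}; summing the first over $2^j\leq\alpha^{-2}$ and the second over $2^j\geq\alpha^{-2}$ gives $\alpha^{2}$, $\alpha^{2}\log(1/\alpha)$ or $\alpha$ according to the codimension. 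In particular the logarithmic loss for $k=n-2$ arises from the number of dyadic scales, not from a Strichartz-type $L^r$ interpolation with $r\sim\log(1/\alpha)$: that device belongs to the curve argument of Section 3.1 and has no counterpart in your kernel framework. A further, smaller, issue: reducing a general $\chi$ with $\chi(0)\neq 0$ to your model $\chi$ by ``factoring out $\chi_1/\chi$'' requires that ratio to be a bounded multiplier, which fails in general (the Schwartz $\chi$ may vanish and $\chi_1$ need not decay); what is actually needed, and all the paper proves, is the estimate for the particular Schwartz $\chi$ of \cite{BuGeTz05}, which suffices for Proposition \ref{proj-spec}.
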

\begin{proof}[Proof of Proposition \ref{proj-spec} assuming Proposition  \ref{proj-spec2}]
There exists $\delta = \frac{1}{N}>0$ and $c>0$ such that $\chi(t) \geq c $ for every $t \in [-\delta, \delta].$ Now let $E = \{j \in \xN: \lambda_j \in [\mu, \mu+ \delta)\}$ and set $\Pi^\delta_\mu u = \sum_{j\in E} \varphi_j.$ On $E$ we have $\chi(\lambda_j - \mu) \geq c>0$  therefore we can write
$$1_E(j) = \chi(\lambda_j-\mu) \frac{1_E(j)}{\chi(\lambda_j-\mu)}.$$
 It follows that  
 $$\Pi^\delta_\mu u = \chi( \sqrt{- \Delta_g} - \lambda) \circ Ru$$
 where $R$ is continuous  from $L^2(M)$ to itself with norm bounded by $\frac{1}{c}$. Therefore assuming  Proposition \ref{proj-spec2} we can write
 \begin{equation}\label{proj-delta}
  \Vert \Pi^\delta_\mu u  \Vert_{L^2( N_{\smash{\alpha h^{1/2}}})} \leq C \alpha^\sigma \Vert Ru \Vert_{L^2(M)} \leq  \frac{C}{c} \alpha^\sigma\Vert  u \Vert_{L^2(M)}.
  \end{equation}
 where the constants in the right are independent of $\mu.$ Now since 
 $$\{j: \lambda_j \in [\lambda, \lambda +1)\} = \cup_{k =0}^{N-1} \{j: \lambda_j \in [\lambda + k \delta, \lambda + (k+1) \delta)\}$$ 
 where the union is disjoint, one can write
  $\Pi_\lambda u = \sum_{k = 0}^{N-1} \Pi^\delta_{\lambda + k \delta}.$ 
 It follows from \eqref{proj-delta} that
 $$ \Vert \Pi _\lambda u  \Vert_{L^2( N_{\smash{\alpha h^{1/2}}})} \leq C'   \alpha^\sigma \Vert  u \Vert_{L^2(M)}$$
 which proves Proposition \ref{proj-spec}.
\end{proof}
It remains to prove Proposition \ref{proj-spec2}. 
Until the end of this section  $\sigma $ will be a real number such that 
$$\sigma = 1 \, \text{  if }  k \leq n-3,  \quad \sigma = 1- \eps\,  ( \eps>0) \, \text{  if }   k = n-2, \quad \sigma = \mez  \, \text{  if } k = n-1.$$
        As before for every $p \in \Sigma^k$ one can find an open neighborhood $U_p$ of $p$ in $M$, a neighborhood $B_0$ of the origin in $\xR^n$ a diffeomorphism $\theta$ from $U_p $ to $B_0$ such that
\begin{equation}\label{N-alpha}
\begin{aligned}
&(i)\quad \theta (U_p \cap \Sigma^k) = \{x =(x_a,x_b) \in (\xR^{k}\times \xR^{n-k}) \cap B_0: x_b=0\}\\
&(ii) \quad \theta( U_p \cap  N_{\smash{\alpha h^{1/2}}}) \subset B_{\alpha,h}=: \{x\in B_0: \vert x_b \vert \leq \alpha h^\mez\}.
\end{aligned}
 \end{equation}
 Now $\Sigma^k $ and $ N_{\smash{\alpha h^{1/2}}}$ for $h$ small, are covered by a finite number of such open neighborhoods i.e. $N_{\alpha  h^\mez}\subset \cup_{j=1}^{n_0} U_{p_j}.$ We take a partition of unity relative to this covering i.e. $(\zeta_j) \in C^\infty(M)$ with $\supp \zeta_j \in U_{p_j}$ and $\sum_{j=1}^{n_0} \zeta_j = 1$ in a fixed neighborhood $ \mathcal{O}$ of $\Sigma^k$ containing $ N_{\smash{\alpha h^{1/2}}}$. For $p \in \mathcal{O}$ we can therefore write
 $$  \chi( \sqrt{- \Delta_g} - \lambda)u(p) = \sum_{j = 1}^{n_0}  \chi( \sqrt{- \Delta_g} - \lambda) (\zeta_ju)(p).$$
Our aim being to bound each term of the right hand side, we shall   skip the index $j$ in what follows. Moreover we shall set for convenience
$$ \chi_\lambda  =:  \chi( \sqrt{- \Delta_g} - \lambda)  $$
 
 We shall use some  results in [BGT] from which we quote the following ones.
 \begin{thm}[\cite{BuGeTz05} Theorem 4]
 There exists  $\chi \in \mathcal{S}(\xR)$ such that $\chi(0) =1$ and for any $p_0 \in \Sigma^k$ there  a diffeomorphism $ \theta$ as above, open sets $W \subset V = \{x \in \xR^n: \vert x \vert \leq \eps_0\}$, a smooth function $a:W_x \times V_y \times \xR^+_\lambda \to \xC$ supported in the set
 $$\{(x,y) \in W \times V: \vert x \vert \leq c_0 \eps \leq c_1 \eps \leq \vert y \vert \leq c_2 \eps \ll 1\}$$
satisfying
 $$ \forall \alpha \in \xN^{2n}, \exists C_\alpha>0: \forall \lambda \geq 0, \vert \partial_{x,y}^\alpha a (x,y, \lambda) \vert \leq C_\alpha,$$
 an operator $\mathcal{R}_\lambda: L^2(M) \to L^\infty(M)$ satisfying
 $$\Vert\mathcal{R}_\lambda u \Vert_{L^\infty(M)} \leq C \Vert  u \Vert_{L^2(M)},$$
 such that  for every $ x \in U =: W \cap \{x: \vert x \vert \leq c \eps\},$ setting $\widetilde{u} = \zeta u\circ  \theta^{-1}$ we have
 \begin{equation}\label{theoBGT}
  \chi_\lambda(\zeta u)(\theta^{-1}(x))=  \lambda^{\frac{n-1}{2}} \int_{y \in V} e^{i \lambda \psi(x,y)} a(x,y, \lambda) \widetilde{u}(y) \, dy + (\mathcal{R}_ \lambda (\zeta u))(\theta^{-1}(x))
  \end{equation}
  where $\psi(x,y) = - d_g((\theta^{-1}(x)),(\theta^{-1}(y)))$ is the geodesic distance on $M$ between $\theta^{-1}(x)$ and $\theta^{-1}(y)$. Furthermore the symbol $a$ is real non negative,  does not vanish for $\vert x \vert \leq c \eps$ and $d_g((\theta^{-1}(x)),(\theta^{-1}(y)))\in [c_3 \eps, c_4 \eps].$
\end{thm}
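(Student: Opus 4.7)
The stated representation says that, for a suitably chosen $\chi$, and after we localize away from the diagonal but inside the injectivity radius, $\chi_\lambda = \chi(\sqrt{-\Delta_g}-\lambda)$ is a semiclassical Fourier integral operator with phase $-d_g$. My plan is to prove it by combining the Hadamard/Lax parametrix for the half-wave propagator with a stationary phase calculation in the wave frequency variable. First I would choose $\chi\in\mathcal{S}(\xR)$ with $\chi(0)=1$ and $\widehat{\chi}\in C_c^\infty(\xR)$ supported in $(-T_0,T_0)$, where $T_0$ is strictly less than half the injectivity radius of $(M,g)$; such a $\chi$ is obtained by dilating a compactly supported even bump and renormalizing so that $\chi(0)=1$.

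Next, Fourier inversion gives
\[ \chi_\lambda \;=\; \frac{1}{2\pi}\int_{-T_0}^{T_0}\widehat{\chi}(t)\,e^{-it\lambda}\,e^{it\sqrt{-\Delta_g}}\,dt. \]
On the time interval $[-T_0,T_0]$, and for $(\theta^{-1}(x),\theta^{-1}(y))$ bounded away from the diagonal but within the injectivity radius, the Schwartz kernel of $e^{it\sqrt{-\Delta_g}}$ admits a classical parametrix representation
\[ K_t(\theta^{-1}(x),\theta^{-1}(y)) \;=\; \int_{\xR^n} e^{i(\varphi(x,y,\xi)+t|\xi|_{g(y)})}\,b(t,x,y,\xi)\,d\xi, \]
modulo a smoothing error, where $\varphi$ generates the identity canonical relation at $t=0$ (so $\partial_\xi\varphi(x,y,\xi)=x-y$ to leading order) and $b\in S^0$ is non-vanishing at $t=0$. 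Performing the $t$-integral first turns $\widehat{\chi}(t)e^{it(|\xi|_{g(y)}-\lambda)}$ back into $2\pi\chi(|\xi|_{g(y)}-\lambda)$ (a Taylor expansion in $t$ of $b$ around $t=0$ produces analogous Schwartz-localized lower-order terms), yielding an oscillatory integral
\[ \chi_\lambda(\zeta u)(\theta^{-1}(x)) \;=\; \int\!\!\!\int e^{i\varphi(x,y,\xi)}\,\widetilde{b}(x,y,\xi,\lambda)\,\widetilde{u}(y)\,d\xi\,dy \;+\; (\mR_\lambda(\zeta u))(\theta^{-1}(x)), \]
with $\widetilde{b}(x,y,\xi,\lambda) = b(0,x,y,\xi)\,\chi(|\xi|_{g(y)}-\lambda) + \text{lower order}$.

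Now rescale $\xi=\lambda\eta$ and pass to polar coordinates $\eta=r\omega$ with $|\omega|_{g(y)}=1$. The factor $\chi(\lambda(r-1))$ is a Schwartz localizer of width $1/\lambda$ around $r=1$; the substitution $r=1+s/\lambda$ contributes an extra $\lambda^{-1}$. On the support determined by $|x|\leq c_0\eps$ and $c_1\eps\leq|y|\leq c_2\eps$, the geodesic distance $d_g(\theta^{-1}(x),\theta^{-1}(y))$ lies in $[c_3\eps,c_4\eps]$, strictly inside the injectivity radius and bounded below by a positive constant. A non-degenerate stationary phase in $\omega\in S^{n-1}$ with large parameter $\lambda$ then has two critical points, the unit codirections tangent to the unique minimizing geodesic between the two endpoints, with critical values $\pm d_g(\theta^{-1}(x),\theta^{-1}(y))$. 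Retaining the critical point whose sign corresponds to $-d_g$ (and absorbing the other into $\mR_\lambda$: its contribution is rapidly oscillating and is estimated by repeated integration by parts in $\omega$, since the two phases differ by $2d_g$ which is bounded below on the support) yields the principal term with overall prefactor $\lambda^n\cdot\lambda^{-(n-1)/2}\cdot\lambda^{-1}=\lambda^{(n-1)/2}$, matching the claim. The resulting symbol $a$ inherits real non-negativity, and non-vanishing on $d_g\in[c_3\eps,c_4\eps]$, from $b(0,x,y,\omega)$ evaluated at the critical $\omega$, which is positive because $e^{it\sqrt{-\Delta_g}}|_{t=0}=\mathrm{Id}$.

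The remainder $\mR_\lambda$ bundles the smoothing error from the parametrix (bounded $L^2\to L^\infty$ by Sobolev embedding), the contribution from the ``wrong sign'' critical point (bounded by repeated integration by parts, hence $O(\lambda^{-\infty})$ as an operator norm), and the lower-order terms in the Taylor expansion of $b$ in $t$ (which carry additional factors of $1/\lambda$). Each of these is uniformly bounded from $L^2(M)$ to $L^\infty(M)$. The main obstacle is not any single analytic step but rather the careful bookkeeping of supports: one must fix $\eps$ small enough that the product of ball and annulus in $(x,y)$-space corresponds to geodesic distances in $[c_3\eps,c_4\eps]$ strictly inside the injectivity radius, while simultaneously ensuring that $\widehat{\chi}$ is supported on times short enough for the Hadamard parametrix to apply. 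Once these constants are locked in, the remaining stationary-phase and symbol-calculus computations are routine and produce precisely the asserted form.
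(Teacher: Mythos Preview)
This theorem is not proved in the present paper; it is quoted from \cite{BuGeTz05} and used as a black box. Your overall strategy---Fourier inversion to pass through the half-wave group, the Lax parametrix for $e^{it\sqrt{-\Delta_g}}$, rescaling to an oscillatory integral with large parameter $\lambda$, and stationary phase on the cosphere---is precisely the route taken in \cite{BuGeTz05}.

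There is, however, a genuine gap in your handling of the second critical point on the sphere. You assert that the contribution from the critical direction with phase $+d_g$ can be put into $\mR_\lambda$ ``by repeated integration by parts in $\omega$, since the two phases differ by $2d_g$.'' That is incorrect: at a stationary point of the phase the $\omega$-gradient vanishes, so integration by parts in $\omega$ gains nothing there. Both critical directions $\omega_\pm$ contribute at the same order $\lambda^{(n-1)/2}$, and an operator with kernel $\lambda^{(n-1)/2}e^{+i\lambda d_g(x,y)}a_+(x,y)$ has $L^2\to L^\infty$ norm of order $\lambda^{(n-1)/2}$, so it cannot be placed inside a uniformly bounded remainder.

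The fix lies in the choice of $\chi$. Take $\widehat\chi\in C_c^\infty\bigl((0,T_0)\bigr)$, supported in strictly positive times (so $\chi$ is complex-valued, which the statement permits). Then either perform stationary phase jointly in $(t,\xi)$---the restriction $t>0$ forces $t=d_g$ and selects the single critical direction yielding the phase $-\lambda d_g$---or, keeping your order of operations, push the radial integral one step further: after the substitution $r=1+s/\lambda$, the $s$-integral of $\chi(s)e^{is\varphi(x,y,\omega)}$ returns a factor $\widehat\chi$ evaluated at $-\varphi(x,y,\omega)$, which vanishes identically at the critical point where $\varphi=+d_g$ and is nonzero at the one where $\varphi=-d_g$. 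Your explicit choice of an \emph{even} bump for $\widehat\chi$ is exactly what makes both critical points contribute; once $\widehat\chi$ is one-sided the remainder of your sketch goes through.
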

Let us set 
\begin{equation}\label{T-lambda}
\mathcal{T}_\lambda \widetilde{u}(x) = \int_{y \in V} e^{i \lambda \psi(x,y)} a(x,y, \lambda) \widetilde{u}(y) \, dy. 
\end{equation}
It follow from \eqref{theoBGT} that
\begin{equation}\label{est1}
\Vert   \chi_\lambda(\zeta u) \Vert_{L^2(N_{\alpha, h})} \leq \lambda^{\frac{n-1}{2}} \Vert \mathcal{T}_\lambda \widetilde{u}\Vert_{L^2(B_{\alpha,h})} + \Vert   \mathcal{R}_\lambda(\zeta u) \Vert_{L^2(N_{\alpha, h})}
\end{equation}
   Let us look to the contribution of $
\mathcal{R}_\lambda.$ Since (see \eqref{N-alpha}) the volume of $ N_{\smash{\alpha h^{1/2}}}$ is bounded by $C(\alpha h^\mez)^{n-k}$ we can  write
  $$\Vert\mathcal{R}_\lambda (\zeta u) \Vert_{L^2( N_{\smash{\alpha h^{1/2}}})} \leq C(\alpha h^\mez)^{\frac{n-k}{2}}\Vert\mathcal{R}_\lambda(\zeta u) \Vert_{L^\infty(M)} \leq C(\alpha h^\mez)^{\frac{n-k}{2}}\Vert  u \Vert_{L^2(M)}.$$ 
  If $k = n-1$ we have $\alpha^\frac{n-k}{2} = \alpha^\mez$ and if $1 \leq k \leq n-2$ we have $ \alpha^\frac{n-k}{2} \leq \alpha.$ Therefore we get
  \begin{equation}\label{est-Rlambda}
  \Vert\mathcal{R}_\lambda (\zeta u)  \Vert_{L^2( N_{\smash{\alpha h^{1/2}}})} \leq C \alpha^\sigma\Vert  u \Vert_{L^2(M)}.
\end{equation}
According to \eqref{est1}  Proposition \ref{proj-spec2} will be a consequence of the following result.
\begin{prop}\label{est-T}
There exists positive constants $C, \lambda_0$ such that
\begin{equation}\label{est-T1} 
 \lambda^{\frac{n-1}{2}} \Vert \mathcal{T}_\lambda \widetilde{u}\Vert_{L^2(B_{\alpha,h})} \leq C \alpha^\sigma \Vert u \Vert_{L^2(M)} 
 \end{equation}
for every $\lambda \geq \lambda_0$ and every $u \in L^2(M)$.
\end{prop}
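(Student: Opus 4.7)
The plan is to prove Proposition~\ref{est-T} by a $TT^*$ argument combined with stationary-phase analysis. I would set $S_\lambda := \lambda^{(n-1)/2}\chi_{B_{\alpha,h}}\mathcal{T}_\lambda$, regarded as an operator $L^2(V)\to L^2(B_{\alpha,h})$. By duality, \eqref{est-T1} is equivalent to $\Vert S_\lambda S_\lambda^*\Vert_{L^2(B_{\alpha,h})\to L^2(B_{\alpha,h})}\le C\alpha^{2\sigma}$, so I would work with the kernel
\[
K(x,x')=\lambda^{n-1}\chi_{B_{\alpha,h}}(x)\chi_{B_{\alpha,h}}(x')\int_V e^{i\lambda[\psi(x,y)-\psi(x',y)]}\,a(x,y,\lambda)\overline{a(x',y,\lambda)}\,dy.
\]
The phase $\Psi_{x,x'}(y):=\psi(x,y)-\psi(x',y)$ is stationary in $y$ exactly when $\theta^{-1}x$, $\theta^{-1}x'$, $\theta^{-1}y$ lie on a common geodesic, with transverse Hessian of rank $n-1$ (the Carleson--Sj\"olin non-degeneracy condition for the Riemannian-distance phase). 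Stationary phase in $y$ therefore yields the Hadamard-type pointwise bound
\[
|K(x,x')|\lesssim \chi_{B_{\alpha,h}}(x)\chi_{B_{\alpha,h}}(x')\,\frac{\lambda^{(n-1)/2}}{d_g(\theta^{-1}x,\theta^{-1}x')^{(n-1)/2}}
\]
in the off-diagonal region $d_g(\theta^{-1}x,\theta^{-1}x')\gtrsim \lambda^{-1}$, together with an explicit oscillation $e^{\pm i\lambda d_g(\theta^{-1}x,\theta^{-1}x')}$; in the near-diagonal region the crude bound $|K|\lesssim \lambda^{n-1}$ suffices, and its contribution to the operator norm is negligible thanks to the small volume $\lesssim \lambda^{-n}$ of that region inside $B_{\alpha,h}$.

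The core of the argument is then to estimate the norm of this operator using the oscillation, not merely the size of $K$. A naive Schur bound that ignores $e^{\pm i\lambda d_g}$ produces quantities of order $\alpha\lambda^{(n-2)/2}$ in the case $k=n-1$, which is much too large; one must exploit the oscillation $e^{i\lambda d_g}$, whose gradient in $x'$, for $x,x'\in B_{\alpha,h}$, is essentially a unit vector tangent to $\Sigma^k$, i.e.\ parallel to the long direction $x_a$ of the tube. After a dyadic decomposition in $d_g(x,x')\sim 2^j\lambda^{-1}$ I would integrate by parts (or apply van der Corput) in $x_a$ on each annulus; the $\lambda^{-1}$ gains produced by the oscillation exactly absorb the $\lambda^{(n-1)/2}$ prefactor in the amplitude, and what remains is the transverse integration over $\{|x_b|\le\alpha h^{1/2}\}$. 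The three values $\sigma=1/2,\,1^-,\,1$ reflect the three regimes (marginal, logarithmic, convergent) of this transverse integral as the transverse dimension $n-k$ takes the values $1$, $2$, and $\ge 3$ respectively.

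The main difficulty is making this integration-by-parts step rigorous. The amplitude is singular on the diagonal; the cutoff $\chi_{B_{\alpha,h}}$ is sharp, so the boundary contributions from integration by parts must be controlled; and in the transverse direction $x_b$ the phase gradient is too small (of size $O(\alpha\lambda^{1/2})$) for oscillation to provide any gain. This would be handled by combining smooth dyadic cutoffs in $|x-x'|$ with the technical oscillatory-integral lemmas gathered in the appendix of the paper, together with the precise form of the amplitude $a$ recalled from~\cite{BuGeTz05}, in particular the fact that $a$ is real and non-negative.
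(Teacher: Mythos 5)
Your overall skeleton (the $TT^*$ reduction, the Hadamard-type description of the kernel of $\mathcal{T}_\lambda\mathcal{T}_\lambda^*$ with oscillation $e^{\pm i\lambda d_g}$ and size $(\lambda\vert x-x'\vert)^{-\frac{n-1}{2}}$, and a dyadic decomposition in $\vert x-x'\vert$) coincides with the paper's, but the step that carries all the analytic content is missing. ``Integrating by parts (or van der Corput) in $x_a$ on each annulus'' is not a meaningful operation for an operator-norm bound: the kernel acts on an arbitrary $L^2$ function $v$, so there is nothing smooth to integrate against, and any Schur-type estimate taking absolute values of the kernel destroys the oscillation --- you yourself acknowledge you do not see how to make this rigorous. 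What the paper actually does on each annulus $\vert x-x'\vert\sim 2^{-j}$ is: an almost-orthogonality decomposition into cubes of side $2^{-j}$, a rescaling to unit scale so that the rescaled phase $2^j\widetilde{\psi}(2^{-j}X,2^{-j}X')$ and the amplitudes coming from Lemma~\ref{BGT} are uniformly smooth, and then an application of the Stein-type $L^2$ oscillatory-integral-operator bounds of the appendix (Theorems~\ref{stein1} and~\ref{stein2}, themselves proved by a second $TT^*$ and non-stationary phase) under two different rank hypotheses on the mixed Hessian of the geodesic distance: rank $n-1$ (Lemma~\ref{hessien}) gives the bound $Ch^{n-1}2^{-j}$, while rank $k-1$ in the tangential block, combined with H\"older in the transverse variables $\vert x_b\vert\le\alpha h^{\mez}$, gives $Ch^{n-1}\alpha^{n-k}2^{j(\frac{n-k}{2}-1)}$. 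The exponents $\sigma=1,\,1^-,\,\mez$ then come from summing these two competing bounds over the dyadic scales, with crossover at $\vert x-x'\vert\sim\alpha^2$ --- i.e.\ from the tangential-scale summation, not, as you assert, from convergence or divergence of the transverse integral over $\{\vert x_b\vert\le\alpha h^{\mez}\}$ (the transverse volume always contributes exactly $\alpha^{n-k}$). Without a substitute for this rank analysis of $\partial^2_{x,x'}d_g$ and the pair of competing $L^2$ estimates, your outline does not produce the stated powers of $\alpha$.

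A second, smaller gap: the near-diagonal piece $\vert x-x'\vert\le\lambda^{-1}$ is not negligible for every $\alpha\in(0,1)$. The crude bound $\vert K\vert\lesssim\lambda^{n-1}$ on a strip of width $\lambda^{-1}$ gives an operator norm $O(h)$, which exceeds $\alpha^{2\sigma}$ as soon as $\alpha\lesssim h^{\mez}$, a regime the proposition must cover. The paper handles this by a second Schur estimate in mixed norms $L^1_{x_b}L^2_{x_a}\to L^\infty_{x_b}L^2_{x_a}$, exploiting the transverse thinness of $B_{\alpha,h}$ to get $(\alpha h^{\mez})^{n-k}h^k$, and then distinguishes the cases $h^{\mez}\le\alpha$ and $\alpha\le h^{\mez}$.
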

\begin{proof}[Proof of Proposition \ref{est-T}]
Set $\mathcal{S}_\lambda = \mathcal{T}_\lambda \mathcal{T}_\lambda^*$ and  denote by $1_B$ the indicator function of the set $B_{\alpha,h}.$ By the usual trick  \eqref{est-T1} will be a consequence of the following estimate.
\begin{equation}\label{est-S}
\Vert 1_B \mathcal{S}_\lambda 1_B v \Vert_{L^2(\xR^n)} \leq C h^{n-1} \alpha^{2 \sigma} \Vert v \Vert_{L^2(\xR^n)}, \quad h = \frac{1}{\lambda}.
\end{equation}
  Let  $\mathcal{K}_\lambda(x,x')$  be the   kernel of $\mathcal{S}_\lambda$. By
   \eqref{T-lambda} it is given by   
\begin{equation}\label{K-lambda}
\mathcal{K}_\lambda(x,x') = \int e^{i \lambda [\psi(x,y) - \psi(x',y)]}a(x,y, \lambda) \overline{a}(x',y, \lambda) \, dy.
\end{equation}
 We shall decompose 
\begin{equation}\label{dec-K}
\left\{
\begin{aligned}
 &\mathcal{K}_\lambda = \mathcal{K}^1_\lambda + \mathcal{K}^2_\lambda,\\
&\mathcal{K}^1_\lambda = 1_{\{\vert x-x' \vert \leq \frac{1}{\lambda}\}
} \mathcal{K}_\lambda,  \quad   \mathcal{K}^2_\lambda = 1_{\{\frac{1}{\lambda} < \vert x-x' \vert \leq  \eps\}} \mathcal{K}_\lambda,\\
& \mathcal{S}_\lambda  = \sum_{j=1}^2 \mathcal{S}_\lambda^j, \quad \mathcal{S}_\lambda^j \widetilde{u}(x)= \int \mathcal{K}^j_\lambda(x,x') \widetilde{u}(x')\,dx'
\end{aligned}
\right.
\end{equation}
and  treat separately each piece.

 \subsubsection{Estimate of $\mathcal{S}_\lambda^1$}
 
 When $\vert x-x' \vert \leq \frac{1}{\lambda}$ the kernel $\mathcal{K}_\lambda $ is uniformly bounded. Therefore $ \vert \mathcal{K}^1_\lambda  \vert \leq C 1_{\{\vert x-x' \vert \leq \frac{1}{\lambda}\}}, $ so by Schur lemma we have
 \begin{equation*} 
   \Vert \mathcal{S}_\lambda^1v \Vert_{L^2(\xR^n)} \leq C h^n\Vert v \Vert_{L^2(\xR^n)}.
 \end{equation*}
 Therefore
 \begin{equation}\label{S1-1}
 \Vert 1_B \mathcal{S}_\lambda^1 1_B v \Vert_{L^2(\xR^n)} \leq  C h h^{n-1} \Vert v \Vert_{L^2(\xR^n)}.
\end{equation}
On the other hand writing $x = (x_a, x_b), x' = (x'_a, x'_b)$ we have
$$ \Vert \mathcal{S}_\lambda^1v(\cdot, x_b) \Vert_{L^2(\xR^k)} \leq C\int_{\xR^{n-k}} 1_{\{\vert x_b -x'_b \vert \leq h\}}\lA \int_{\xR^k} 1_{\{\vert x_a -x'_a \vert \leq h\}} v(x'_a,x'_b)\, dx'_a\rA_{L^2(\xR^k)}  \, dx'_b.$$
 Again by Schur lemma we get
  $$ \Vert \mathcal{S}_\lambda^1v  \Vert_{L^\infty(\xR^{n-k},L^2(\xR^k))} \leq C h^k    \Vert v \Vert_{L^1(\xR^{n-k}, L^2(\xR^k))}.$$
 We deduce that
 $$
  \Vert 1_B \mathcal{S}_\lambda^1 1_B v \Vert_{L^2(\xR^n)} \leq  C (\alpha h^\mez)^{n-k} h^k \Vert v \Vert_{L^2(\xR^n)}.
$$
This estimate can be rewritten as 
\begin{equation}\label{S1-2}  
   \Vert 1_B \mathcal{S}_\lambda^1 1_B v \Vert_{L^2(\xR^n)} \leq  C \alpha^{2 \sigma}  \alpha^{n-k-2 \sigma}h^{\frac {n-k}{2} + k}   \Vert v \Vert_{L^2(\xR^n)}. 
\end{equation}  
  Now if $ h^\mez \leq \alpha$ we use \eqref{S1-1} and we obtain 
  $$  \Vert 1_B \mathcal{S}_\lambda^1 1_B v \Vert_{L^2(\xR^n)} \leq  C \alpha^{2}  h^{n-1}\Vert v \Vert_{L^2(\xR^n}.$$
If $\alpha \leq h^\mez$ we use instead \eqref{S1-2}. Since $n-k-2\sigma \geq 0$ we can write
\begin{align*}
  \Vert 1_B \mathcal{S}_\lambda^1 1_B v \Vert_{L^2(\xR^n)}  &\leq C \alpha^{2 \sigma} h^{\mez(n-k-2 \sigma) + \mez(n-k) +k}  \Vert v \Vert_{L^2(\xR^n)}= C \alpha^{2 \sigma} h^{ n-  \sigma }  \Vert v \Vert_{L^2(\xR^n)}\\
  & \leq C \alpha^{2 \sigma} h^{ n-  1 }  \Vert v \Vert_{L^2(\xR^n)}.
  \end{align*}
 Therefore  in all cases we have 
 \begin{equation}\label{S1-3}
  \Vert 1_B \mathcal{S}_\lambda^1 1_B v \Vert_{L^2(\xR^n)} \leq C \alpha^{2 \sigma} h^{ n-  1 }  \Vert v \Vert_{L^2(\xR^n)}.
\end{equation}
  
   To deal with the  other  regime  we need the   description of the kernel $\mathcal{K} $ given in \cite{BuGeTz05}.
\begin{lemm}  [\cite{BuGeTz05} Lemma 6.1] \label{BGT}
There exists $\eps \ll1,( a_p^{\pm},b_p)_{p\in \xN} \in {C^\infty(\xR^n \times \xR^n \times \xR)}$ such that for $\vert x-x' \vert \gtrsim \lambda^{-1}$ and any $N \in \xN^*$ we have
$$\mathcal{K}_\lambda(x,x') = \sum_{\pm} \sum_{p =0}^{N-1} \frac{e^{  \pm i \lambda \widetilde{\psi}(x,x')}}{(\lambda \vert x-x'\vert)^{\frac{n-1}{2} + p}} a_p^{\pm}(x,x',\lambda) + b_N(x,x', \lambda)$$
where   $\widetilde{\psi}(x,x')$ is the geodesic distance between the points $\theta^{-1}(x)$ and $\theta^{-1}(x').$  Moreover $a_p^\pm$ are real, have supports of size $\mathcal{O}(\eps)$ with respect to the two first variables and are uniformly bounded with respect to $\lambda.$ Finally
$$ \vert  b_N(x,x', \lambda) \vert \leq C_N ( \lambda \vert x-x' \vert)^{-(\frac{d-1}{2} +N)}.$$
\end{lemm}
    
\subsubsection{Estimate of $\mathcal{S}_\lambda^2$}
  
  We cut the set $\frac{1}{\lambda} \leq \vert x-x'\vert \leq \eps$ into pieces 
  $$ \vert x-x' \vert \sim 2^{-j}, \quad \frac{1}{\lambda} \leq  2^{-j} \leq \eps$$
  and we estimate the contribution of each term. According to Lemma \ref{BGT} we are lead to work with the operator
   $$A_j v(x) = \int k_ j(x,x', \lambda) v(x') \, dx' $$
   where
   \begin{equation}\label{kj}
  k_ j(x,x', \lambda)= (\lambda  2^{-j})^{- \frac{n-1}{2}}\chi_0(2^j(x-x')) e^{i\lambda \widetilde{\psi}(x,x')} \sum_{p=0}^{N-1} \lambda^{-p} a_p(x,x',\lambda). 
  \end{equation}
  Now there exists   $\chi \in C^\infty(\xR^n)$ such that $\supp \chi \subset \{x: \vert x \vert \leq 1\}, \chi(x) = 1 $ if $\vert x \vert \leq \mez$ and
  $$ \sum_{p\in \xZ^n} \chi(x-p) = 1, \forall x \in \xR^n.$$
 Following \cite{BuGeTz05} we   write
 \begin{equation}\label{kjpq}
 \begin{aligned}
 k_j(x,x',\lambda) &= \sum_{p,q \in \xZ^n} k_{j p q}(x,x',\lambda)\\
  k_{j p q}(x,x',\lambda)& = \chi(2^jx-p) k_j(x,x',\lambda)\chi(2^jx'-q) 
  \end{aligned}
 \end{equation}
and we denote by $A_{jpq}$ the operator with kernel $ k_{j p q}.$

Notice that the sum appearing in \eqref{kjpq} is to be taken only for  $\vert p-q \vert \leq 2. $   

We claim that   by quasi orthogonality in $L^2$ we have
\begin{equation}\label{Aj-Ajpq}
\Vert 1_B A_j 1_B \Vert _{L^2(\xR^n) \to L^2(\xR^n)} \leq C \sup_{\vert p-q \vert \leq 2}\Vert 1_B A_{jpq} 1_B \Vert _{L^2(\xR^n) \to L^2(\xR^n)}.
\end{equation}
Indeed let us forget $1_B$ which plays any role. We have
$$\Vert A_j v \Vert_{L^2(\xR^n)} = \sum_{\vert p-q \vert \leq 2}  \sum_{\vert p'-q' \vert \leq 2}\int A_{jpq}[\widetilde{ \chi}(2^j\cdot-q)v](x)  A_{jp'q'}[\widetilde{ \chi}(2^j\cdot-q')v](x)\, dx$$
where $\widetilde{\chi} \in C_0^\infty(\xR^n), \widetilde{\chi} = 1$ on the support of $\chi$ and $\sum_{p\in \xZ^n} [\widetilde{\chi} (x-p)]^2 \leq M, \forall x \in \xR^n.$ Due to the presence of $\chi(2^jx-p)$, $\chi(2^jx-p')$ ans $\chi_0(2^j(x-x')$ inside the above integral one must also have $\vert p-p' \vert \leq 2$ in the sum.
Therefore we are summing on the set  $E = \{(p,q,p',q'): \vert p-q \vert \leq 2, \vert p-p' \vert \leq 2, \vert p'-q' \vert \leq 2\}.$ We have
\begin{align*}
E \subset E_1 &= \{(p,q,p',q'): \vert p-q \vert \leq 2, \vert p' -q \vert \leq 4, \vert q' -q \vert \leq 6\},\\
E \subset E_2&= \{(p,q,p',q'): \vert p'-q' \vert \leq 2, \vert p-q' \vert \leq 4, \vert q  -q' \vert \leq 6\}.
\end{align*}
It follows from  the Cauchy-Schwarz inequality that $\Vert A_j v \Vert_{L^2(\xR^n)}$ can be bounded by
 $$   \Big( \sum_{E_1} \Vert A_{jpq}\Vert^2_{L^2 \to L^2} \Vert  \widetilde{ \chi}(2^j\cdot-q)v\Vert^2_{L^2(\xR^n)}\Big)^\mez \Big( \sum_{E_2} \Vert A_{jp'q'}\Vert^2_{L^2 \to L^2} \Vert  \widetilde{ \chi}(2^j\cdot-q')v\Vert^2_{L^2(\xR^n)}\Big)^\mez$$
and therefore by the choice of $\widetilde{\chi}$ by
 $C \sup_{\vert p-q \vert \leq 2}\Vert   A_{jpq}  \Vert^2 _{L^2(\xR^n) \to L^2(\xR^n)} \Vert v \Vert^2_{L^2(\xR^n)} $ which proves our claim.

Now let us consider the operator $Q_{jpq}$ defined by
\begin{equation}\label{Bjpq}
\begin{aligned}
Q_{jpq} v(X) &= \int_{\xR^n} \sigma_{jpq}(X,X',\lambda) v(X')\, dX'\\
  \sigma_{jpq}(X,X',\lambda) & = \chi(X-p) k_j(2^{-j}X, 2^{-j} X', \lambda) \chi(X'-q).
\end{aligned}
\end{equation}
Then by the change  of variables $ (x = 2^{-j}X, x' = 2^{-j}X') $ we can see easily that 
\begin{align}
&\Vert 1_{2^jB} Q_{jpq} 1_{2^jB} v \Vert_{L^2(\xR^n)} \leq K_j \Vert v \Vert_{L^2(\xR^n)} \quad \text{implies} \label{Qjpq} \\
& \Vert 1_{ B} A_{jpq} 1_{ B} v \Vert_{L^2(\xR^n)} \leq 2^{-jn} K_j \Vert v \Vert_{L^2(\xR^n)}. \label{Ajpq}
\end{align}
Setting
\begin{equation}\label{psij}
\mu_j = \lambda 2^{-j}, \quad \widetilde{\psi}_j(X,X') = 2^j  \widetilde{\psi}(2^{-j}X,2^{-j}X'),
\end{equation}
 we deduce from  \eqref{kj} and \eqref{Bjpq} we have
\begin{equation}\label{sigmajpq}
\begin{aligned}
\sigma_{jpq}(X,X',\lambda) =\mu_j^{-\frac{n-1}{2}} e^{i \mu_j \widetilde{\psi}_j(  X,  X')} &\chi(X-p) \chi(X-q)   \chi_0(X-X') \\ & \cdot \sum_{p=0}^{N-1}\lambda^{-p} a_p(2^{-j}X , 2^{-j}X', \lambda).
\end{aligned}
\end{equation}
We shall derive two estimates of the left hand side of \eqref{Qjpq}. On one hand  using  Theorem \ref{stein2} with $p = k-1$ we can  write,
\begin{align*}
  \Vert 1_{2^jB} Q_{jpq} 1_{2^jB} v \Vert_{L^2(\xR^n)} &\leq C  ( \alpha h^\mez 2^j)^{\frac{n-k}{2}}  \Vert    Q_{jpq} 1_{2^jB} v \Vert_{L^\infty(\xR^{n-k}_{x_b}\times \xR_{x_{a1}}, L^2 (\xR^{k-1}_{x_a'}))},\\
  &\leq C \mu_j^{-\frac{n-1}{2}}( \alpha h^\mez 2^j)^{\frac{n-k}{2}} \mu_j^{-\frac{k-1}{2}}  \Vert1_{2^jB} v \Vert_{L^1(\xR^{n-k}_{x_b}\times \xR_{x_{a1}}, L^2 (\xR^{k-1}_{x_a'}))},\\
  &\leq C \mu_j^{-\frac{n-1}{2}}( \alpha h^\mez 2^j)^{n-k} \mu_j^{-\frac{k-1}{2}} \Vert v \Vert_{L^2(\xR^n)}.
  \end{align*}
 We deduce from \eqref{Ajpq} and \eqref{Aj-Ajpq} that 
 \begin{equation}\label{Ajpq2}
 \Vert 1_{ B} A_{j} 1_{ B} v \Vert_{L^2(\xR^n)} \leq C h^{n-1} \alpha^{n-k}2^{j(\frac{n-k}{2}-1)}  \Vert v \Vert_{L^2(\xR^n)}.
\end{equation}
On the other hand using   Theorem \ref{stein1} with $p = n-1$ we can write
$$ \Vert 1_{2^jB} Q_{jpq} 1_{2^jB} v \Vert_{L^2(\xR^n)} \leq \Vert  Q_{jpq} 1_{2^jB} v \Vert_{L^2(\xR^n)}  \leq C \mu_j^{-\frac{n-1}{2}} \mu_j^{-\frac{n-1}{2}} \Vert v \Vert_{L^2(\xR^n)},$$ 
from which we deduce using \eqref{Ajpq} and \eqref{Aj-Ajpq} that
 \begin{equation}\label{Ajpq3}
   \Vert 1_{ B} A_{j} 1_{ B} v \Vert_{L^2(\xR^n)}  \leq C 2^{-jn} (2^j h)^{ n-1 }   \leq C h^{n-1} 2^{-j}.
  \end{equation}
Recall that we  have  $\mathcal{S}_\lambda^2 = \sum_{j\in E} A_j$ where $E = \{j: \frac{1}{\eps} \leq 2^j \leq \lambda\}.$  Then we write
\begin{equation}\label{S3}
\begin{aligned}
1_B S_\lambda^2 1_B v &= \sum_{ j \in E_1} 1_B A_j1_B v + \sum_{ j \in E_2} 1_B A_j1_B v =(1) + (2), \quad \text{where} \\
E_1 &=\{j: \frac{1}{\eps} \leq 2^j \leq \alpha^{-2}\}, \quad E_2= \{j: \alpha^{-2} \leq 2^j \leq \lambda\}.
\end{aligned}
\end{equation}
  To estimate the term $(1)$ we use \eqref{Ajpq2}. We obtain
  $$\Vert (1) \Vert_{L^2(\xR^n)} \leq C h^{n-1} \alpha^{n-k} \sum_{j \in E_1} 2^{j(\frac{n-k}{2} -1)} \Vert v \Vert_{L^2(\xR^n)}.$$
 Then we have three cases. 
 
 If $\frac{n-k}{2} -1 >0$ that is if $ k \leq n-3$ then 
 $$\Vert (1) \Vert_{L^2(\xR^n)} \leq C h^{n-1} \alpha^{n-k} \Big( \frac{1}{\alpha^2}\Big)^{\frac{n-k}{2} -1 }  \Vert v \Vert_{L^2(\xR^n)}\leq C h^{n-1} \alpha^2  \Vert v \Vert_{L^2(\xR^n)}.$$
  If $\frac{n-k}{2} -1  =0$ that is if $ k  = n-2$ then 
  $$\Vert (1) \Vert_{L^2(\xR^n)} \leq C  h^{n-1} \alpha^2  \text{ Log} (\alpha^{-1})  \Vert v \Vert_{L^2(\xR^n)} .$$
 If  $ k = n-1$ then
 $$\Vert (1) \Vert_{L^2(\xR^n)} \leq C h^{n-1} \alpha \sum_{j=0}^\infty 2^{-j}  \Vert v \Vert_{L^2(\xR^n)} \leq  C h^{n-1} \alpha \Vert v \Vert_{L^2(\xR^n)}.$$
  To estimate the term $(2)$ we use \eqref{Ajpq3}. We obtain
  $$ \Vert (2) \Vert_{L^2(\xR^n)} \leq C h^{n-1} \alpha^{2} \Vert v \Vert_{L^2(\xR^n)}.$$
  Using these estimates and \eqref{S3} we deduce
  \begin{equation}\label{estS3}
  \Vert 1_B S_\lambda^2 1_B v \Vert_{L^2( \xR^n)} \leq C \alpha^{2\sigma}  h^{n-1}\Vert v \Vert_{L^2(\xR^n)} 
\end{equation}
 where  $\sigma = 1$ if $k \leq n-3$, $\sigma = 1- \eps$ if $k = n-2, \sigma = \mez $ if $k = n-1.$
  \end{proof} 
    Gathering the estimates proved in \eqref{S1-3} and \eqref{estS3} we obtain \eqref{est-S} which proves Proposition \ref{est-T} and therefore Proposition \ref{proj-spec}.   The proof of  Theorem \ref{quasi-mode} is complete.
    
 \setcounter{section}{0}   
\renewcommand{\thesection}{\Alph{section}}
  \section{Some technical results}
  \subsection{A lemma}
   \begin{lemm}\label{A1}
   Let $w \in C^\infty(M)$ be a solution of the equation $(h^2\Delta_g +1)w = F$ Then
   $$  \Vert w \Vert_{L^2(  N_{\smash{\alpha h^{1/2}}})} \leq C \frac{\alpha^\gamma}{h} \big(\Vert F \Vert_{L^2(M)} + \Vert w \Vert_{L^2(M)} \big) $$
   where $\gamma = \mez$ if $k = n-1$, $\gamma = 1$ if $1 \leq k \leq n-2.$ \end{lemm}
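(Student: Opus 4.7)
The strategy I would adopt is to mimic, with the function $w$ in place of $\psi$, the spectral-decomposition argument already used in the proof of Theorem \ref{quasi-mode} from Proposition \ref{proj-spec}. Expanding $w = \sum_{j\geq 0}\varphi_j$ in an $L^2$-orthonormal basis of eigenfunctions of $-\Delta_g$ with eigenvalues $\lambda_j^2$, the equation $(h^2\Delta_g+1)w = F$ gives $(1-h^2\lambda_j^2)\varphi_j = F_j$, where $F = \sum_j F_j$ is the corresponding spectral decomposition of the source. Setting $\lambda = 1/h$ and $N = [\eps_0\lambda]$, I would split
\[
w \;=\; \sum_{\vert k\vert\leq N}\Pi_{\lambda+k}w \;+\; R_N.
\]

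The two spectrally localized sums are handled exactly as in that earlier derivation. For $\vert k\vert\leq 1$, Proposition \ref{proj-spec} gives $\Vert\Pi_{\lambda+k}w\Vert_{L^2(N_{\smash{\alpha h^{1/2}}})} \leq C\alpha^\sigma\Vert w\Vert_{L^2(M)}$, and since $h\leq h_0\leq 1$ this is bounded by $(C\alpha^\sigma/h)\Vert w\Vert_{L^2(M)}$. For $2\leq\vert k\vert\leq N$, I use the lower bound $\vert 1-h^2\lambda_j^2\vert\geq ch\vert k\vert$ on the spectral support of $\Pi_{\lambda+k}$ (which follows exactly as in the discussion preceding \eqref{est-somme}); combined with Proposition \ref{proj-spec}, orthogonality, and Cauchy--Schwarz in $k$, this yields $\Vert\sum_{2\leq\vert k\vert\leq N}\Pi_{\lambda+k}w\Vert_{L^2(N_{\smash{\alpha h^{1/2}}})} \leq (C\alpha^\sigma/h)\Vert F\Vert_{L^2(M)}$.

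The main new task is to control the non-resonant remainder $R_N = \sum_{j\in A\cup B}\varphi_j$, supported on $\{\lambda_j\leq\lambda-N\}\cup\{\lambda_j\geq\lambda+N+1\}$, where $\vert 1-h^2\lambda_j^2\vert\geq c>0$ (so in particular $\Vert R_N\Vert_{L^2(M)}\leq C\Vert F\Vert_{L^2(M)}$). To pass from this $L^2(M)$ bound to the desired $L^2(N_{\smash{\alpha h^{1/2}}})$ bound I would perform a further dyadic decomposition in frequency, $R_N = \sum_\ell \Pi_{[2^\ell, 2^{\ell+1})}R_N$, and apply Proposition \ref{proj-spec} at each scale $\mu = 2^\ell$: the fixed physical neighborhood $N_{\smash{\alpha h^{1/2}}}$ has, at scale $\mu$, effective parameter $\beta = \alpha\sqrt{\mu/\lambda}$, which is $\leq\alpha$ for $\mu\leq\lambda$ (providing a gain in the low-frequency shells) and, for $\mu\geq\lambda$, must be combined with the strong bound $\Vert\Pi_\mu w\Vert_{L^2}\leq C\Vert\Pi_\mu F\Vert_{L^2}/(h\mu)^2$ coming from the equation. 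Summing the dyadic pieces with Cauchy--Schwarz should then yield
\[
\Vert R_N\Vert_{L^2(N_{\smash{\alpha h^{1/2}}})} \;\leq\; \frac{C\alpha^\gamma}{h}\bigl(\Vert F\Vert_{L^2(M)} + \Vert w\Vert_{L^2(M)}\bigr).
\]

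The hardest part, I anticipate, is the bookkeeping in the high-frequency regime, where $\beta = \alpha\sqrt{\mu/\lambda}$ can exceed $1$ and Proposition \ref{proj-spec} must be used in conjunction with the trivial bound $\Vert\Pi_\mu w\Vert_{L^2(N)}\leq\Vert\Pi_\mu w\Vert_{L^2}$; the $(h\mu)^{-2}$ decay from the equation has to be carefully balanced against this loss so that the dyadic sum still converges. The clean exponent $\gamma = 1$ (without logarithmic factor) in the codimension-two case should emerge naturally from this organization, since the log loss appearing in Proposition \ref{proj-spec} for $k = n-2$ comes from summing \emph{consecutive} projectors at the critical scale $\lambda$, whereas here the summation is over dyadic scales well-separated from the resonant one.
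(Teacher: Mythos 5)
Your route is not the paper's: the paper proves this lemma by purely elementary means, with no spectral projectors at all. From Lemma \ref{energie} (with $b\equiv 0$) and the equation one gets $h\Vert \nabla_g w\Vert_{L^2(M)}+h^2\Vert \Delta_g w\Vert_{L^2(M)}\leq C(\Vert F\Vert_{L^2(M)}+\Vert w\Vert_{L^2(M)})$, hence after localization $h\Vert \widetilde w\Vert_{H^1}+h^2\Vert\widetilde w\Vert_{H^2}\lesssim \Vert F\Vert+\Vert w\Vert$ and by interpolation $h^{1+\eps}\Vert\widetilde w\Vert_{H^{1+\eps}}\lesssim \Vert F\Vert+\Vert w\Vert$; then one uses the Sobolev embeddings $H^1(\xR)\subset L^\infty(\xR)$ (codimension $1$) or $H^{1+\eps}(\xR^2)\subset L^\infty(\xR^2)$ (codimension $\geq 2$) in the transverse variables together with the thinness $\vert x_b\vert\leq\alpha h^{\mez}$ of the slab to convert the $L^\infty$ control into the factors $(\alpha h^{\mez})^{\mez}$ or $\alpha h^{\mez}$. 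This is why the lemma is cheap, carries the crude $h^{-1}$, and — crucially — is logically independent of Proposition \ref{proj-spec}, which is what allows the paper to invoke it to handle the remainder $R_N$ inside the proof of Theorem \ref{quasi-mode}.

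Your proposal has a genuine gap in the case $k=n-2$: every spectral window near the resonant scale ($\vert k\vert\leq 1$, and the weighted sum over $2\leq\vert k\vert\leq N$) is estimated through Proposition \ref{proj-spec}, whose exponent there is $\sigma=1^-$, i.e.\ a bound $C\alpha\vert\log\alpha\vert$, whereas the lemma asserts the clean $\gamma=1$; since $\vert\log\alpha\vert$ is not controlled by $h^{-1}$ uniformly in $\alpha\in(0,1)$, the stated estimate does not follow. Your explanation for why the log should disappear is also incorrect: the logarithm in Proposition \ref{proj-spec} does not come from summing consecutive unit projectors at scale $\lambda$ — it is already present in the single-window estimate, arising inside the proof of Proposition \ref{est-T} from the sum over dyadic \emph{spatial} scales $2^{-j}$ between $\alpha^{-2}$ and $\eps^{-1}$ in the critical case $\frac{n-k}{2}-1=0$. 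Two further points you would need to address even for the other codimensions: Proposition \ref{proj-spec} is stated for unit windows $[\mu,\mu+1)$ with $1/\mu\leq h_0$, so (i) your dyadic blocks must be resummed from $\sim 2^{\ell}$ unit windows, costing a factor $2^{\ell/2}$ (affordable only because the target allows $h^{-1}$, but it must be tracked), and (ii) the bounded-frequency part of $R_N$ cannot be treated by the proposition at all and needs a separate elementary argument (e.g.\ volume of $N_{\smash{\alpha h^{1/2}}}$ times $L^\infty$ bounds on the finite-dimensional low-frequency space). Given all this, the elementary energy-plus-Sobolev argument is both simpler and the only one among the two that actually yields the claimed exponents.
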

\begin{proof}
Setting  $\Vert \nabla_g w \Vert_{L^2(M)}=  \Big(\int_Mg_p \big (\nabla_g w(p), \overline{\nabla_g w(p)} \big) \, dv_g(p)\Big)^\mez $ we deduce from Lemma \ref{energie} and from the equation that 
\begin{equation}\label{reg-H2}
\begin{aligned}
 &h\Vert \nabla_g w \Vert_{L^2(M)} \leq C \big(\Vert F \Vert_{L^2(M)} + \Vert w \Vert_{L^2(M)} \big),\\
 & h^2 \Vert \Delta_g w\Vert_{L^2(M)} \leq  \Vert F \Vert_{L^2(M)} + \Vert w \Vert_{L^2(M)}.
 \end{aligned}
 \end{equation}
 Now setting $ \widetilde{w}_j  = (\chi_jw)\circ \theta^{-1}$ we have
 \begin{equation}\label{est-w1}
  \Vert w \Vert_{L^2(  N_{\smash{\alpha h^{1/2}}})}  \leq \sum_{j=1}^{n_0} \Vert \chi_jw \Vert_{L^2(  N_{\smash{\alpha h^{1/2}}})} \leq C\sum_{j=1}^{n_0} \Vert \widetilde{w}_j \Vert_{L^2( B_{\alpha,h})}.
  \end{equation}
 For fixed $j \in \{1, \ldots, n_0\}$  we deduce from \eqref{reg-H2} that
 \begin{equation}\label{reg-H2'}
  h \Vert \widetilde{w}_j \Vert_{H^1(B_{\alpha,h})} + h^2 \Vert \widetilde{w}_j \Vert_{H^2(B_{\alpha,h})} \leq C \big(\Vert F \Vert_{L^2(M)} + \Vert w \Vert_{L^2(M)} \big),
  \end{equation}
 from which we deduce that for $\eps>0$ small
 \begin{equation}\label{reg-1+eps}
  h^{1+ \eps} \Vert \widetilde{w}_j \Vert_{H^{1+ \eps}(B_{\alpha,h})}  \leq C \big(\Vert F \Vert_{L^2(M)} + \Vert w \Vert_{L^2(M)} \big).
  \end{equation}
Using the Sobolev embeddings  $H^1(\xR) \subset L^\infty(\xR)$ and $H^{1+ \eps}(\xR^2) \subset L^\infty(\xR^2)$,  the fact that $B_{\alpha,h}\subset  \{x= (x_a,x_b) \in \xR^k \times \xR^{n-k}: \vert x_b \vert \leq \alpha h^\mez\}$ and  \eqref{reg-H2'},  \eqref{reg-1+eps} we obtain
\begin{align*}
& \Vert \widetilde{w}_j \Vert_{L^2( B_{\alpha,h})} \leq  (\alpha h^\mez)^\mez \Vert \widetilde{w}_j \Vert_{H^1(B_{\alpha,h})} \leq C  \frac{\alpha^\mez }{h} \big(\Vert F \Vert_{L^2(M)} + \Vert w \Vert_{L^2(M)} \big)  
, \quad \text{ if } k=n-1,\\
&  \Vert \widetilde{w}_j \Vert_{L^2( B_{\alpha,h})} \leq \alpha h^\mez \Vert \widetilde{w}_j \Vert_{H^{1+ \eps}(B_{\alpha,h})} \leq  C \frac{\alpha}{h} \big(\Vert F \Vert_{L^2(M)} + \Vert w \Vert_{L^2(M)} \big)  
,  \quad \text{ if } k\leq n-2.
 \end{align*}
  Lemma \ref{A1} follows then from \eqref{est-w1}.
\end{proof}

   \subsection{Stein's lemma}
   In this section we prove a version of Stein Lemma~\cite[Chap 9, Proposition 1.1]{St93}. 
For $\lambda>0$ we consider the operator
 \begin{equation}\label{Tlambda}
 T^\lambda u(\Xi) = \int_{\xR^n} e^{i\lambda\phi(X, \Xi)} a(X, \Xi, \lambda) u(X) \, dX
\end{equation}
 where $\phi :\xR^n \times \xR^n  \to \xR$ is a smooth real valued  phase and $a$   a smooth symbol. 
 
 We shall make the following assumptions.
 \begin{align*}
 &(\text{H}1) \,  \text{ there exists a compact } \mathcal{K} \subset \xR^n \times \xR^n  \text{ such that }
 \supp_{X, \Xi} a\subset \mathcal{K}, \quad \forall \lambda>0,\\
 &(\text{H}2)  \,  \text{ rank }  \Big(\frac{\partial^2 \phi}{\partial X_i \partial \Xi_j}(X, \Xi)\Big)_{1 \leq i,j \leq n} \geq  p \in \{1, \ldots, n\},  \forall (X, \Xi) \in \mathcal{K}.
\end{align*}
Our purpose is to prove the following result.
 \begin{thm}\label{stein1}
 Under the hypotheses (H1) and (H2) there exists   $C>0$ such that  $$
 \Vert T^\lambda u \Vert_{L^2(\xR^n)} \leq C \lambda^{-\frac{p}{2}} \Vert    u \Vert_{L^2(\xR^n)}
 $$ 
  for every $\lambda>0$ and all  $u \in L^2(\xR^n).$
 \end{thm}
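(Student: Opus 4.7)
The plan is a $TT^*$ argument combined with non-stationary phase in the variable $X$, exploiting the rank-$p$ hypothesis on the mixed Hessian. Setting $S^\lambda = T^\lambda (T^\lambda)^*$, its kernel is
$$K_\lambda(\Xi, \Xi') = \int_{\xR^n} e^{i\lambda[\phi(X,\Xi) - \phi(X,\Xi')]}\, a(X,\Xi,\lambda)\,\overline{a(X,\Xi',\lambda)}\, dX.$$
Since $\Vert T^\lambda\Vert_{L^2 \to L^2}^2 = \Vert S^\lambda\Vert_{L^2 \to L^2}$, by Schur's test it is enough to prove $\sup_\Xi \int \vert K_\lambda(\Xi, \Xi')\vert\, d\Xi' \leq C\, \lambda^{-p}$, together with the symmetric bound. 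A smooth partition of unity subordinate to a finite cover of $\mathcal{K}$ reduces matters to the case where $a$ is supported in a small neighborhood of a single point $(X_0, \Xi_0) \in \mathcal{K}$.

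By hypothesis (H2) the matrix $M_0 := (\partial^2 \phi / \partial X_i \partial \Xi_j)(X_0,\Xi_0)$ has rank at least $p$, so one can fix a $p$-dimensional subspace $W \subset \xR^n$ on which $M_0$ is injective, meaning $\vert M_0 v\vert \geq c_0\, \vert P_W v\vert$ for every $v \in \xR^n$ (with $P_W$ the orthogonal projection onto $W$). Writing $\Phi(X,\Xi,\Xi') = \phi(X,\Xi) - \phi(X,\Xi')$, Taylor's formula gives
$$\nabla_X \Phi = M(X,\Xi,\Xi')\,(\Xi - \Xi'), \qquad M = \int_0^1 \partial^2_{X\Xi}\phi\bigl(X, \Xi' + s(\Xi-\Xi')\bigr)\,ds,$$
and by shrinking the support of $a$ if necessary, $M$ stays so close to $M_0$ that
$$\vert \nabla_X \Phi\vert \geq c\, \vert P_W(\Xi - \Xi')\vert.$$
All higher $X$-derivatives of $\Phi$ and of $a$ remain uniformly bounded on the support, and $\vert \Xi - \Xi'\vert$ stays in a bounded set.

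In the regime $\lambda\vert P_W(\Xi-\Xi')\vert \geq 1$, one integrates by parts $N$ times in $X$ with the operator $L = (i\lambda\vert\nabla_X\Phi\vert^2)^{-1}\,\nabla_X\Phi\cdot\nabla_X$, which satisfies $L(e^{i\lambda\Phi}) = e^{i\lambda\Phi}$; keeping track of derivatives in a weighted norm scaled by $\vert P_W(\Xi-\Xi')\vert$ shows that each application of the transpose $L^T$ gains a factor $(\lambda\vert P_W(\Xi-\Xi')\vert)^{-1}$. Combining this with the trivial bound $\vert K_\lambda\vert \leq C$ from (H1) in the opposite regime, one gets
$$\vert K_\lambda(\Xi, \Xi')\vert \leq C_N\, \bigl(1 + \lambda\vert P_W(\Xi-\Xi')\vert\bigr)^{-N}.$$
Since compactness of $\supp a$ confines $\Xi-\Xi'$ to a bounded set, decomposing $\Xi-\Xi' = w + w^\perp$ with $w \in W$, $w^\perp \in W^\perp$, and choosing $N > p$,
$$\int \vert K_\lambda(\Xi, \Xi')\vert\, d\Xi' \leq C \int_{W} \bigl(1+\lambda\vert w\vert\bigr)^{-N}\, dw \leq C'\, \lambda^{-p},$$
where the bounded $W^\perp$-integration has been absorbed in the constant.

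The main technical obstacle is controlling the iterated integration by parts: each application of $L^T$ produces factors of $\vert\nabla_X\Phi\vert^{-1}$ alongside derivatives of $\Phi$ and $a$, and a naive bound degenerates when $\Xi-\Xi'$ points nearly into $W^\perp$. Separating the two regimes above and working in the $\vert P_W(\Xi-\Xi')\vert$-weighted symbol calculus lets the iteration close cleanly, which is where the uniform smoothness of $\phi$ and $a$ on $\mathcal{K}$ enters decisively.
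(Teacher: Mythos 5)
There is a genuine gap at the key step of your argument: the lower bound $\vert \nabla_X \Phi(X,\Xi,\Xi')\vert \geq c\,\vert P_W(\Xi-\Xi')\vert$ with a \emph{fixed} $p$-dimensional subspace $W$ does not follow from (H2), no matter how small you make the support of $a$. From $\Vert M-M_0\Vert \leq \varepsilon$ you only get $\vert M(\Xi-\Xi')\vert \geq c_0\vert P_W(\Xi-\Xi')\vert - \varepsilon\vert \Xi-\Xi'\vert$, and the error is of the same size as the main term exactly in the dangerous region where $\Xi-\Xi'$ is nearly in the kernel of the mixed Hessian; that kernel varies with $X,\Xi,\Xi'$, so $M(\Xi-\Xi')$ can vanish while $P_W(\Xi-\Xi')\neq 0$. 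Your claimed kernel estimate is in fact false in general. Take $n=2$, $p=1$,
$$\phi(X,\Xi)=x_1\Bigl(\xi_1+\tfrac{1}{2}\xi_2^2\Bigr),$$
with $a$ a bump near $X_0=(0,0)$, $\Xi_0=(0,1)$: the mixed Hessian has first row $(1,\xi_2)$ and second row $(0,0)$, so (H2) holds with $p=1$, and your construction forces $W=\mathrm{span}\{(1,1)\}$. But $\Phi=x_1\bigl[\xi_1-\xi_1'+\tfrac12(\xi_2^2-\xi_2'^2)\bigr]$ vanishes identically in $X$ when the bracket vanishes; choosing $\Xi'=(\xi_1',1)$ and $\Xi=(\xi_1'-\tfrac12(2+t)t,\,1+t)$ with $\vert t\vert$ small, one gets $\vert K_\lambda(\Xi,\Xi')\vert\sim 1$ while $\vert P_W(\Xi-\Xi')\vert = t^2/(2\sqrt2)>0$ is independent of $\lambda$, so no bound of the form $(1+\lambda\vert P_W(\Xi-\Xi')\vert)^{-N}$ can hold, and the trivial bound $\vert K_\lambda\vert\leq C$ on the bad cone cannot rescue the Schur estimate since that region has $\lambda$-independent measure. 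The structural reason is that in a $TT^*$ performed in \emph{all} variables the cross term $\partial^2_{x\eta}\phi\,(\eta-\eta')$ can cancel the nondegenerate term $\partial^2_{x\xi}\phi\,(\xi-\xi')$, and the cancellation locus moves with $X$, destroying uniform non-stationary phase in $X$.

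The paper's proof avoids precisely this by never applying $TT^*$ in the degenerate directions: after a permutation of coordinates one fixes a $p\times p$ block $\partial^2_{x\xi}\phi$ invertible on the whole (small) support, freezes $(y,\eta)$ as parameters, and runs the Stein argument (non-stationary phase plus Schur, i.e. your computation but with $\eta'=\eta$, so the cross term is absent) only in $(x,\xi)$, obtaining $\Vert T^\lambda_{(y,\eta)}\Vert_{L^2_x\to L^2_\xi}\leq C\lambda^{-p/2}$ uniformly in $(y,\eta)$; Minkowski's inequality in $y$ and a supremum in $\eta$ then give the mixed-norm bound $L^1_y L^2_x\to L^\infty_\eta L^2_\xi$ of Theorem~\ref{stein2}, and the compact support hypothesis (H1) converts this into the $L^2\to L^2$ statement. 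If you insist on a global $TT^*$, the decay of $K_\lambda$ must be measured by the nonlinear quantity $\inf_X\vert\nabla_X\Phi(X,\Xi,\Xi')\vert$ (distance to the stationary variety, which is curved, as the example shows) rather than by a fixed linear projection, and the ensuing Schur/co-area estimate is considerably more delicate; the block and mixed-norm route is the clean repair.
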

 \begin{rema}
 We shall actually apply Theorem~\ref{stein1} for a family of phases $\phi_j$ and symbols $a_j$ converging in $C^\infty$ topology to a fixed phase $\phi$ and symbol $a$ and use that in such case the estimates are uniform with respect to the parameter $j$, which will be a consequence of the proof given below.
 \end{rema} 
Below we shall prove a slightly stronger result.
 
  First of all by the hypothesis (H1), using   partitions of unity, we may assume without loss of generality that with a small $\eps>0$ 
  $$\supp_{X, \Xi}a \subset V_{\rho_0} = \{(X,\Xi)\in \xR^n \times \xR^n: \vert X-X_0\vert + \vert \Xi - \Xi_0\vert \leq \eps\}, \quad \rho_0 = (X_0, \Xi_0).$$
  Moreover changing if necessary the orders of the variables we may assume that near $\rho_0$
  $$X = (x,y) \in \xR^p \times \xR^{n-p}, \quad \Xi = (\xi,\eta) \in \xR^p \times \xR^{n-p} $$ 
  and for all $(X,\Xi)\in   V_{\rho_0}$ the $p\times p$-matrix
\begin{equation}\label{eq.H}
 M_p(X, \Xi) = \Big(\frac{\partial^2 \phi}{\partial x_i \partial \xi_j}(X, \Xi)\Big)_{1 \leq i,j \leq p}
 \end{equation}
 is invertible with $\Vert  M_p(X, \Xi)^{-1} \Vert \leq c_0.$

 Then we have
 \begin{thm}\label{stein2}
 There exists a positive constant $C$ such that for every $\lambda>0$ we have
 $$
 \Vert T^\lambda u \Vert_{L^\infty(\xR^{n-p}_\eta,   L^2(\xR^p_\xi))} \leq C \lambda^{-\frac{p}{2}} \Vert    u \Vert_{L^1(\xR^{n-p}_y,   L^2(\xR^p_x))}
 $$ 
 for all $u\in L^1(\xR^{n-p}_y,   L^2(\xR^p_x)).$
\end{thm}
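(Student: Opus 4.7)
The plan is to reduce the mixed-norm statement to a uniform family of $p$-dimensional $L^2\to L^2$ oscillatory integral estimates, parametrized by the ``non-elliptic'' variables $(y,\eta)\in\xR^{n-p}\times\xR^{n-p}$, and then to conclude by Minkowski's inequality in the $y$-variable and a supremum in the $\eta$-variable.

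More precisely, for fixed $(y,\eta)$, consider the $p$-dimensional operator
\begin{equation*}
S^\lambda_{y,\eta} v(\xi) = \int_{\xR^p} e^{i\lambda \phi((x,y),(\xi,\eta))}\, a((x,y),(\xi,\eta),\lambda)\, v(x)\, dx.
\end{equation*}
The first step is to prove the uniform estimate
\begin{equation*}
\lA S^\lambda_{y,\eta} v\rA_{L^2(\xR^p_\xi)} \leq C\,\lambda^{-p/2}\,\lA v\rA_{L^2(\xR^p_x)},
\end{equation*}
with a constant $C$ independent of $(y,\eta)$. This is the standard one-parameter Stein lemma in $p$ variables, proved by the $TT^*$ argument: the Schwartz kernel of $S^\lambda_{y,\eta}(S^\lambda_{y,\eta})^*$ is
\begin{equation*}
K(\xi,\xi') = \int_{\xR^p} e^{i\lambda[\phi((x,y),(\xi,\eta))-\phi((x,y),(\xi',\eta))]}\,a((x,y),(\xi,\eta),\lambda)\,\overline{a((x,y),(\xi',\eta),\lambda)}\, dx,
\end{equation*}
and the key observation is that by hypothesis \eqref{eq.H} the matrix $M_p$ is uniformly invertible on the support of $a$, so the $x$-gradient of the phase difference satisfies $|\nabla_x[\phi(\cdot,(\xi,\eta))-\phi(\cdot,(\xi',\eta))]|\gtrsim |\xi-\xi'|$ uniformly in $(y,\eta)$. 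Repeated non-stationary phase integration by parts then yields $|K(\xi,\xi')|\leq C_N(1+\lambda|\xi-\xi'|)^{-N}$ for any $N$, and Schur's lemma gives $\|S^\lambda_{y,\eta}(S^\lambda_{y,\eta})^*\|_{L^2\to L^2}\leq C\lambda^{-p}$, hence the claim. The uniformity in $(y,\eta)$ follows because all constants in this argument (the lower bound for $\|M_p^{-1}\|^{-1}$, the support size, the symbol seminorms) are uniform on the compact set $\mathcal{K}$.

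Once this uniform estimate is in hand, the second step is purely structural: write
\begin{equation*}
T^\lambda u(\xi,\eta) = \int_{\xR^{n-p}} (S^\lambda_{y,\eta}\, u(\cdot,y))(\xi)\, dy,
\end{equation*}
apply Minkowski's integral inequality in $y$ to the $L^2(\xR^p_\xi)$-norm, invoke the uniform estimate above, and finally take the supremum over $\eta$:
\begin{equation*}
\lA T^\lambda u(\cdot,\eta)\rA_{L^2(\xR^p_\xi)} \leq \int \lA S^\lambda_{y,\eta}\, u(\cdot,y)\rA_{L^2(\xR^p_\xi)}\, dy \leq C\lambda^{-p/2}\int \lA u(\cdot,y)\rA_{L^2(\xR^p_x)}\, dy.
\end{equation*}

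The only nontrivial point is the uniformity of the $p$-dimensional Stein estimate in the parameters $(y,\eta)$, which is precisely what the remark about the $C^\infty$-convergent families $(\phi_j,a_j)$ already anticipates. I expect no serious obstacle here, since rank $p$ of the mixed Hessian combined with the compact support assumption (H1) implies, after the preliminary partition of unity and reordering of variables that reduces to the invertibility of $M_p$, that all constants in the integration-by-parts and Schur-lemma steps can be controlled by a finite number of $C^\infty$-seminorms of $\phi$ and $a$ on $\mathcal{K}$.
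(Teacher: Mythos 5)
Your proposal is correct and follows essentially the same route as the paper: the same reduction to a family of $p$-dimensional operators $T^\lambda_{(y,\eta)}$ with an $L^2\to L^2$ bound uniform in $(y,\eta)$ (proved by the $TT^*$ argument, non-stationary phase via the uniform invertibility of $M_p$ on the small support, and Schur's lemma), followed by Minkowski's inequality in $y$ and a supremum in $\eta$. No substantive differences to report.
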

 Theorem \ref{stein1}   follows from Theorem \ref{stein2} using (H1).
  \begin{proof}[Proof of Theorem \ref{stein2}]
 It is an easy consequence of the proof of a proposition in section 1.1 Chapter IX in~\cite{St93}. Indeed let us set for $(y,\eta)\in \xR^{n-p} \times \xR^{n-p}$
 \begin{align}
 &\phi_{(y,\eta)}(x,\xi) = \phi(x,y,\xi,\eta), \quad a_{(y,\eta)}(x,\xi) = a(x,y,\xi,\eta), \quad u_{ y }(x) = u(x,y),\\
 & T^\lambda_{(y,\eta)} u_y(\xi) = \int_{\xR^p} e^{i\lambda  \phi_{(y,\eta)}(x,\xi)}a_{(y,\eta)}(x,\xi)  u_{ y }(x)\, dx.\label{local}
 \end{align}
 Then we have
 \begin{equation}\label{dec}
 T^\lambda u(\Xi) = \int_{\xR^{n-p}} T^\lambda_{(y,\eta)} u_y(\xi)\, dy.
 \end{equation}
 We claim that there exists $C>0$ such that for every $ (y,\eta)\in V_{(y_0,\eta_0)}$ we have
 \begin{equation}\label{est-stein}
  \Vert T^\lambda_{(y,\eta)} u_y \Vert_{L^2(\xR^p_\xi)} \leq C \lambda^{-\frac{p}{2}} \Vert   u_y \Vert_{L^2(\xR_x^p)} \quad \forall \lambda>0.
  \end{equation}
Assuming for a moment that \eqref{est-stein} is proved we obtain 
     $$\Vert T^\lambda u(\cdot, \eta)\Vert_{L^2(\xR^p_\xi)} \leq   \int_{\xR^{n-p}} \Vert T^\lambda_{(y,\eta)} u_y \Vert_{L^2(\xR^n_\xi)} \, dy \leq C \lambda^{-\frac{p}{2}} \int_{\xR^{n-p}} \Vert u(\cdot,y)\Vert_{L^2(\xR^p_x)}\, dy$$
 which implies immediately the conclusion of Theorem \ref{stein1}.
 
 The claim \eqref{est-stein} follows immediately from the proof of proposition in ¤1.1 Chapter IX in [Stein]. However, for the convenience of the reader,  we shall give it here. 
 
 For simplicity we shall skip the subscript $(y, \eta),$ keeping in mind the uniformity, with respect to $(y, \eta) \in V_{(y_0, \eta_0)}$,  of the constants in the estimates. Therefore we  set 
 $$S_\lambda = T^\lambda_{(y,\eta)},\quad \phi_{(y,\eta)}= \psi,  \quad b = a_{(y,\eta)}.$$
 It follows from \eqref{eq.H} that the matrix
 $$N(x, \xi) = \Big( \frac{\partial^2 \psi}{\partial x_i \partial \xi_j}(x,\xi) \Big)_{1 \leq i,j \leq p}$$
 is invertible and $\Vert N(x, \xi)^{-1} \Vert \leq c_0$ where $c_0$ is independent of $(y,\eta).$
  Now by the usual trick the estimate \eqref{est-stein} is satisfied if and only if we have
  \begin{equation}\label{trick}
   \Vert S_\lambda S_\lambda^* f\Vert_{L^2(\xR^p)} \leq C \lambda^{-p} \Vert f \Vert_{L^2(\xR^p)}
   \end{equation}
  with $C$ independent of $(y, \eta).$ It is easy to see that 
  \begin{equation}\label{SS*}
  S_\lambda S_\lambda^* f(\xi) = \int_{\xR^p} K(\xi, \xi') f( \xi')\, d\xi'
  \end{equation}
  with
  $$ K(\xi, \xi') = \int_{\xR^k} e^{i\lambda(\psi(x, \xi) - \psi(x, \xi'))}b(x, \xi) \overline{b}(x, \xi')\, dx.$$
  Let us set
  $$c(x, \xi, \xi') = N(x,\xi)^{-1} \frac{\xi-\xi'} {\vert \xi-\xi' \vert}.$$
  Then we can write
  \begin{equation}\label{L1}
    c(x, \xi,\xi')\cdot \nabla_x e^{i\lambda(\psi(x, \xi) - \psi(x, \xi'))} = e^{i\lambda(\psi(x, \xi) - \psi(x, \xi'))}i \lambda \Delta(x, \xi,\xi')
    \end{equation}
  where
  \begin{align*}
  \Delta(x, \xi,\xi')&=  \sum_{j=1}^k c_j(x, \xi,\xi')\big(\frac{\partial \psi}{\partial x_j}(x, \xi) -\frac{\partial \psi}{\partial x_j}(x, \xi')\big),\\ 
   & = \sum_{j,l=1}^k c_j(x, \xi,\xi')\big( \frac{ \partial^2 \psi}{\partial x_j\partial \xi_l}(x, \xi)(\xi_l - \xi'_l) + \mathcal{O}(\vert \xi  - \xi' \vert^2 \big), \\
    & = \langle N(x, \xi) c(x, \xi, \xi'), \xi - \xi' \rangle + \mathcal{O}(\vert \xi  - \xi' \vert^2) 
     = \vert \xi- \xi' \vert +   \mathcal{O}(\vert \xi  - \xi' \vert^2),
   \end{align*}
  where $\mathcal{O}(\vert \xi  - \xi' \vert^2)$ is independent of $(y, \eta)$. Since    $b$ has small support in   $ \xi$ we deduce that
  \begin{equation}\label{Delta}
   \Delta(x, \xi,\xi') \geq C  \vert \xi- \xi' \vert.
   \end{equation}
  Moreover since the derivatives with respect to $x$ of $N(x,\xi)^{-1}$ are products of $N(x,\xi)^{-1}$ and derivatives of  $N(x,\xi),$ we see that all the derivatives with respect to $x$ of $\Delta(x, \xi,\xi')$ are uniformly bounded in $(y,\eta)$ near $(y_0, \eta_0)$.
  Let us set 
  $$L =   \frac{1}{ i \lambda \Delta(x, \xi,\xi')}c(x, \xi,\xi')\cdot \nabla_x. $$
  It follows from \eqref{L} and the fact that $b$ has compact support in $x$ that for every $N \in \xN$ we can write
  $$ K(\xi, \xi') = \int_{\xR^p} e^{i\lambda(\psi(x, \xi) - \psi(x, \xi'))}\big(^t\!  L)^N [b(x, \xi) \overline{b}(x, \xi')]\, dx.$$
  We deduce from \eqref{Delta} that for every $N \in \xN$ there exists $C_N>0$ independent of $(y, \eta)$ such that
  $$\vert K(\xi, \xi') \vert \leq \frac{C_N}{(1 + \lambda \vert \xi - \xi' \vert)^N}.$$
  Taking $N>p$ we deduce from \eqref{SS*} and Schur lemma that \eqref{trick} holds with a constant $C$ independent of $(y,\eta).$ This completes the proof.
  \end{proof}
  \begin{lemm}\label{hessien}
  Let $ d \geq 1,$ $ \delta \in \xR $  and $\varphi_0 (x,x') = \big( \sum_{j=1}^d   (x_j-x'_j)^2 +  \delta^2 \big)^\mez$. Let  $M=  \Big( \frac{\partial^2 \varphi_0}{\partial x_j  \partial x'_k}(x,x')\Big)_{1 \leq j,k \leq d}.$ Then
\begin{align*}
   &(i)  \, \text{ if }  \delta\neq 0  \quad  M  \text{ has rank }  d  \quad \text{for all } x,x' \in \xR^d,\\
    &(ii)  \, \text{ if }   \delta =0  \quad  M  \text{ has rank }   d-1  \text{ for }  x\neq x'. 
\end{align*}
 \end{lemm}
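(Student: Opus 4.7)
The plan is to compute the matrix $M$ explicitly and then recognize it as a rank-one perturbation of a multiple of the identity, which makes its spectrum (and hence its rank) transparent.

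First I would compute the first derivative:
\[
\frac{\partial \varphi_0}{\partial x_j}(x,x') = \frac{x_j - x'_j}{\varphi_0(x,x')}.
\]
Setting $r = \varphi_0(x,x')$ and differentiating once more in $x'_k$ (using $\partial r/\partial x'_k = -(x_k - x'_k)/r$), a direct calculation gives
\[
M_{jk} = \frac{\partial^2 \varphi_0}{\partial x_j \partial x'_k}(x,x') = -\frac{\delta_{jk}}{r} + \frac{(x_j - x'_j)(x_k - x'_k)}{r^3} = \frac{1}{r}\bigl(-\delta_{jk} + v_j v_k\bigr),
\]
where $v \in \xR^d$ is the vector with components $v_j = (x_j - x'_j)/r$.

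Next I would observe that $M = \frac{1}{r}(-I + v v^{T})$ is an explicit rank-one perturbation of a scalar matrix, so its spectrum can be read off immediately. Any $w \perp v$ satisfies $(-I + vv^{T})w = -w$, yielding the eigenvalue $-\frac{1}{r}$ with multiplicity $d-1$. On the line spanned by $v$ we compute $(-I + vv^{T})v = (-1 + |v|^2)v$, giving the remaining eigenvalue $\frac{1}{r}(|v|^2 - 1)$. By construction $|v|^2 = (r^2 - \delta^2)/r^2 = 1 - \delta^2/r^2$, so this last eigenvalue is $-\delta^2/r^3$.

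It remains to read off the rank in each case. In case (i), $\delta \neq 0$ implies $r \geq |\delta| > 0$ (so the computation is valid everywhere), and both eigenvalues $-1/r$ and $-\delta^2/r^3$ are strictly nonzero, so $M$ has rank $d$. In case (ii), $\delta = 0$ and the assumption $x \neq x'$ ensures $r = |x - x'| > 0$, so $M$ is well-defined; now the second eigenvalue vanishes while $-1/r$ persists with multiplicity $d-1$, giving rank exactly $d-1$. There is no real obstacle here: the only thing to be mindful of is keeping track of where $r > 0$ so the formulas make sense, which is exactly where the two hypotheses $\delta \neq 0$ and $x \neq x'$ enter.
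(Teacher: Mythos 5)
Your proof is correct, and while it starts from the same explicit computation as the paper, it finishes by a genuinely different (and arguably cleaner) linear-algebra argument. Both you and the paper first reduce to $M=\varphi_0^{-1}\bigl(-\delta_{jk}+\omega_j\omega_k\bigr)$ with $\omega=(x-x')/\varphi_0$. The paper then computes $\det\bigl(-\delta_{jk}+\lambda\,\omega_j\omega_k\bigr)$ as a polynomial in $\lambda$, showing its second derivative vanishes identically, which yields $\det M=(-1)^d\delta^2/\varphi_0^2$ up to a nonzero scalar factor for case (i); for case (ii) it exhibits a nonvanishing $(d-1)\times(d-1)$ minor, which by itself only gives rank at least $d-1$ and must be combined with the determinant formula (with $\delta=0$) to get equality. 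You instead diagonalize the rank-one perturbation $-I+vv^{T}$: eigenvalue $-1/r$ with multiplicity $d-1$ on $v^{\perp}$ and $(|v|^2-1)/r=-\delta^2/r^3$ on the span of $v$, and since $M$ is real symmetric the rank is the number of nonzero eigenvalues; this gives both (i) and rank exactly $d-1$ in (ii) in one stroke. The only degenerate point worth a word is $x=x'$ in case (i): then $v=0$ and ``the line spanned by $v$'' is trivial, but $M=-r^{-1}I$ directly, which has rank $d$, so your conclusion is unaffected; a one-line remark covering this case would make the argument airtight.
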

\begin{proof}
$(i)$ \quad 
A simple computation shows that    
$$M= \varphi_0 (x,x')^{-1}(- \delta_{jk} + \omega_j \omega_k ), \quad \omega_j = \frac{x_j-x'_j}{\varphi_0 (x,x')}$$
where $\delta_{jk}$ is the Kronecker symbol. For $\lambda \in \xR$ consider the  polynomial in $\lambda$
$$F(\lambda) = \det \big( -\delta_{jk} + \lambda \omega_j \omega_k    \big)_{1 \leq j,k \leq d} $$
We have obviously $F(0) = (-1)^d.$  Now  denote by $C_j(\lambda)$ the $j^{th}$ column of this determinant. Then
$$
F'(\lambda) = \sum_{k=1}^{d} \det \big(C_1(\lambda), \ldots, C'_k(\lambda),\ldots C_{d }(\lambda) \big).
$$
 Since $ \det \big(C_1(0), \ldots, C'_k(0),\ldots C_d(0) \big) = (-1)^{d-1}\omega_j^2 $ we obtain $F'(0) = (-1)^{d-1}\sum_{j=1}^{d} \omega_j^2.$
  Now   $C_j(\lambda)$ being  linear with respect to $\lambda$ 
we have $C''_j(\lambda) = 0.$ Therefore
$$
F''(\lambda) = \sum_{j=1}^{d} \sum_{k=1,k\neq j}^{d} 
\det \big(C_1(\lambda),\ldots, C'_j(\lambda),\ldots,C'_k(\lambda),\ldots, C_{d}(\lambda) \big).
$$
Since $C'_j(\lambda) = \omega_j(\omega_1,\ldots,\omega_d)$ 
and $C'_k(\lambda) = \omega_k(\omega_1,\ldots,\omega_d)$  
we have  $F''(\lambda) = 0$ for all $\lambda \in \xR.$ It follows that 
  $F(\lambda) = (-1)^{d}(1 - \lambda  \sum_{j=1}^{d} \omega_j^2).$ Therefore 
  $$\det M = (-1)^{d}(1 -   \sum_{j=1}^{d} \omega_j^2) = (-1)^{d} \frac{\delta^2}{\varphi_0(x,x')^2} \neq 0.$$
$(ii)$  Since $x-x'\neq0$ we may assume without loss of generality that $\omega_d \neq 0.$  Set 
$$ A= \big( - \delta_{jk}  + \omega_j \omega_k \big)_{1 \leq j,k \leq d-1}.$$
Introducing $ G(\lambda) = \det \big( -\delta_{jk} + \lambda \omega_j \omega_k    \big)_{1 \leq j,k \leq d-1} $the same computation as above shows  that   
   $$\det \,A  = (-1)^{d-1}(1 - \sum_{j=1}^{d-1} \omega_j^2) = (-1)^{d-1}\omega_d^2 \neq 0.$$
 \end{proof}

\def\cprime{$'$} \def\cprime{$'$}


\end{document}